\newtheorem{theorem}{Theorem}[section]
\newtheorem{lemma}[theorem]{Lemma}
\newtheorem{definition}[theorem]{Definition}
\newtheorem{remark}[theorem]{Remark}
\def\underset#1#2{{\mathrel{\mathop {{}_{} {#2}}\limits_{{#1}_{}}}}}
\def\upplim_#1{\underset{#1}{\overline\lim}\;}
\def\lowlim_#1{\underset{#1}{\underline\lim}\;}
\newcommand{\rank}{\mathrm{rank}}
\numberwithin{equation}{section}
\begin{document} 
\title[Schmidt's subspace theorem for moving hypersurface targets]{The degenerate Schmidt's subspace theorem for moving hypersurface targets.} 

\author{Giang Le}

\setlength{\baselineskip}{16pt}
\maketitle

\begin{center}
{\it Department of Mathematics, Hanoi National University of Education,}

{\it 136-Xuan Thuy, Cau Giay, Hanoi, Vietnam.}

\textit{E-mail: legiang01@yahoo.com}
\end{center}

\begin{abstract} {Our goal is to give Schmidt's subspace theorem for moving hypersurface targets in subgeneral position in projective varieties.}
\end{abstract}

\def\thefootnote{\empty}
\footnotetext{
2010 Mathematics Subject Classification:
 11J68, 11J25.\\
\hskip8pt Key words and phrases: Diophantine approximation, Subspace theorem.}

\setlength{\baselineskip}{16pt}
\maketitle
\section{Introduction}
Schmidt's subspace theorem is a very powerful tool from Diophantine approximation which has many significant applications to Diophantine equations. Its original form can be referred to as ''Schmidt's subspace theorem with fixed targets'' since the finitely many targets can be thought of as remaining fixed as an approximating points moves through infinitely many points. One direction to generalize the subspace theorem is to allow ''targets'' to vary slowly.

In 1980s, due to the work of Vojta, Osgood, Lang, etc, people have started to realize that there is a striking analogue between Nevanlinna theory and Diophantine approximation. Vojta has compiled a dictionary about this connection. Via this dictionary, Cartan's Second Main Theorem corresponds to Schmidt's subspace theorem. A growing understanding of these analogue has motivated the development in both subjects. 

C. Osgood (see \cite{O1,O2}) and N. Steinmetz (see \cite{Ste}) proved "Second Main theorem with moving targets". This was Vojta's motivation for Roth's theorem with moving targets (see \cite{V2}). Later, M. Ru and Vojta (see \cite{RV}) extended this theorem to a version of Schmidt's subspace theorem with moving targets which corresponds to Ru-Stoll's result \cite{RS} in Nevanlinna theory. 

To state Schmidt's subspace theorem, we first  introduce some standard notations in Diophantine geometry. For details concerning the Diophantine Geometry, we refer the reader to \cite{1}, \cite{V1}.  Through this paper, let $k$ be a number field. Denote by $M_k$ the set of places (equivalent classes of absolute values) of $k$ and write $M_k^{\infty}$ for the set of archimedean places of $k$. For $v\in M_k$, we choose the normalized absolute value $|.|_v$ such that $|.|_v=|.|$ on $\mathbb{Q}$ (the standard absolute value) if $v$ is archimedean, whereas for $v$ non-archimedean $|p|_v=p^{-1}$ if $v$ lies above the rational prime $p$. For a valuation $v$ of $k$, denote by $k_v$ the completion of $k$ with respect to $v$ and set $n_v:=[k_v:\mathbb{Q}_v]/[k:\mathbb{Q}]$. We put $\|x\|_v=|x|_v^{n_v}.$ These absolutes values satisfy the product formula
$$\prod_{v\in M_k}\|x\|_v=1 \,\text{for}\, x\in k^*.$$

For  $x=[x_{0}:\ldots:x_{M}]\in \mathbb{P}^M(k)$, we put
$$\|x\|_v:=\max(\|x_0\|_v,\ldots,\|x_M\|_v),\quad v\in M_k.$$
 Then the absolute logarithmic height of $x$ is defined by
$$h(x)=\sum_{v\in M_k}\log\|x\|_v.$$ 
By the product formula, this does not depend on the choice of homogeneous coordinates $[x_0:\ldots:x_M].$ If $x\in k^*,$ we define the absolute logarithmic height of $x$ by
$$h(x)=\sum_{v\in M_k}\log^+\|x\|_v.$$ 

We also set the convenient notation
$$\epsilon_v(r)=\begin{cases}&r\,\mbox{ if}\; v\mbox{ is archimedean}\\
&1\,\mbox{ if}\; v\;\mbox{ is non-archimedean}.\end{cases}$$
With this notation, the triangle inequality can be written uniformly as follow
$$\|a_1+\cdots+a_r\|_v\leq\epsilon^{n_v}_v(r)\max\{\|a_1\|_v,\ldots,\|a_r\|_v\}, \forall a_i\in k, i=1,\ldots, r.$$

For a positive integer $d$, we set
$$\mathcal{T}_d:=\{(i_0,\ldots,i_M)\in\mathbb{N}^{M+1}_0: i_0+\cdots+i_M=d\}.$$

Let $Q=\sum_{I\in\mathcal{T}_{d}}a_{I}x^I$ be a homogeneous polynomial of degree d in $k[X_0,\ldots,X_M],$ where $x^I=x_0^{i_0}\cdots x_M^{i_M}$ for $x=(x_0,\ldots,x_M)$ and $I=(i_0,\ldots,i_M).$ Denote by $\|Q\|_v=\max\{\|a_I\|_v\}.$ The height of $Q$ is defined by
$$h(Q)=\sum_{v\in M_k}\log\|Q\|_v.$$
Then, for  every $v\in M_k$, the Weil function $\lambda_{Q,v}$ is defined by
$$\lambda_{Q,v}(x)=\log\dfrac{\|x\|_v^d.\|Q\|_v}{\|Q(x)\|_v},\quad x\in\mathbb{P}^M(k)\backslash\{Q=0\}.$$

We now state a variant statement of Ru-Vojta's result \cite{RV}which is more convenient to use. 

Let $\Lambda$ be an infinite index set. A collection of points $\{x(\alpha)\in\mathbb{P}^M(k)|\alpha\in\Lambda\}$ will be regarded as a map $x: \Lambda\longrightarrow\mathbb{P}^M(k)$.
\vskip0.2cm
\noindent
\textbf{Theorem A (Schmidt's subspace theorem for moving hyperplane targets).}  {\it Let k be a number field, $M_k^\infty\subset S$  be a finite set of places of k , let $\epsilon>0.$ Let $\Lambda$ be an infinite index set.  Let $L_1,\ldots, L_q$ be moving hyperplanes $\Lambda\longrightarrow (\mathbb{P}^n)^*(k)$ and let $x: \Lambda\longrightarrow\mathbb{P}^n(k)$ be a collection of points such that:


(1) x is non-degenerate over $\mathcal{R}$ with respect to $L_1,\ldots, L_q.$

(2) $h(L_j(\alpha))=o(h(x(\alpha))), j=1,\ldots, q.$

Then, there exists an infinite index subset $A\subset \Lambda$ such that
$$\sum_{v\in S}\max_K\sum_{j\in K}\lambda_{L_j(\alpha),v}(x(\alpha))\leq (n+1+\epsilon)h(x(\alpha)),$$
for all $\alpha\in A,$ where the maximum is taken over all subsets $K$ of $\{1,\ldots, q\}$ such that $L_j(\alpha), j\in K$ are linearly independent over $k$ for each $\alpha\in \Lambda.$
}\\

Ru-Vojta \cite{RV} also studied the more general case in which hyperplanes are located in $m$-subgeneral position.\\
\noindent
 \textbf{Theorem B.}  {\it Let k be a number field, $M_k^\infty \subset S$ be a finite set of places of k, let $\epsilon>0.$ Let $\Lambda$ be an infinite index set. Let $L_1,\ldots, L_q$ be moving hyperplanes $\Lambda\longrightarrow (\mathbb{P}^n)^*(k)$ and let $x: \Lambda\longrightarrow\mathbb{P}^n(k)$ be a collection of points such that:

(1) For  every  $\alpha\in \Lambda$, $L_1(\alpha),\ldots, L_q(\alpha)$ are in m- subgeneral position, that means any $m+1$ linear forms in $\{L_1(\alpha),\ldots, L_q(\alpha)\}$ have no common solutions in $\mathbb{P}^n(k)$.

(2) x is non-degenerate over $\mathcal{R}$ with respect to $L_1,\ldots, L_q.$

(3) $h(L_j(\alpha))=o(h(x(\alpha))), j=1,\ldots, q.$

Then, there exists an infinite index subset $A\subset \Lambda$ such that
$$\sum_{v\in S}\sum_{j=1}^q\lambda_{L_j(\alpha),v}(x(\alpha))\leq (2m-n+1+\epsilon)h(x(\alpha)),$$
for all $\alpha\in A.$
}

The generalizations of the Subspace theorem to projective variety $V$ and hypersurfaces located in general position have been given by  Corvaja-Zannier \cite{CZ} and Evertse-Ferretti \cite{EF08}. The case when $V$ is  projective space is due to Corvaja-Zannier. Later, Ru proved the analytic counter-part of such the results in \cite{Ru04, Ru09}. After that, Dethloff-Tan \cite{DT} and Cherry-Dethloff-Tan \cite{CDT} generalize Ru's results to moving hypersurface targets.  In arithmetic case, G. Le \cite{G2}, Chen-Ru-Yan \cite{CRY3} and Son-Tan-Thin \cite{STT} extended  Corvaja-Zannier-Evertse-Ferretti's result to moving hypersurface targets and projective variety $V$. The case when $V$ is  projective space is due to G. Le \cite{G2} and Chen-Ru-Yan \cite{CRY3}. 

Let $V$ be an irreducible projective subvariety of $\mathbb{P}^M$ defined over $k$ of dimension $n, (n\leq M)$. Let $m\geq n$ be a positive integer. Recall that hypersurfaces $\{D_i\}_{i=1}^q, q>m,$  in $\mathbb{P}^M$  are said to be located in \emph{m-subgeneral position} with respect to $V$ if for any $1\le i_0<\cdots <i_{m}\leq q$,
$$ V(\bar{k})\cap (\bigcap_{j=0}^{m}Q_{i_j}=0)=\emptyset .$$
If $m=n$, they are said to be located in \emph{general position}.

 Recently, Chen, Ru and Yan \cite{CRY2} and Levin \cite{Levin}(Theorem 5.1) generalized Corvaja-Zannier-Evertse-Ferretti' results to projective variety  $V$ and family of hypersurfaces located in $m-$subgeneral position with respect to $V$. This is our motivation to study Schmidt's subspace theorem for moving hypersurface targets in subgeneral position in projective variety.  

To state our results, we first introduce some notations.

Let $A\subset\Lambda$ be an infinite index subset and $a$ be a set-theoretic map $A\longrightarrow k$ . For precisely, we can denote this map by $(A,a)$.
\begin{definition}
Let $A\subset\Lambda$ be an infinite index subset and $C_1,C_2\subset A$ be subsets of $A$ with finite complement.Two pairs $(C_1, a_1)$ and $(C_2, a_2)$ are called equivalent if there is a subset $C\subset C_1\cap C_2$ such that $C$ has finite complement in $A$ and such that the restrictions of $a_1,a_2$ to $C$ coincide. Let $\mathcal {R}^0_A$ be the set of equivalent classes of pairs $(C,a)$. Then $\mathcal{R}^0_A$ has an obvious ring structure. Moreover, we can embed $k$ into $\mathcal{R}^0_A$ as constant functions.
\end{definition}

 A moving hypersurfaces of degree $d$ in $\mathbb{P}^M(k)$ will be regarded as a map $Q: \Lambda\longrightarrow \mathbb{P}(H^0(\mathbb{P}^M(k),\mathcal{O}(d))).$ For every $\alpha\in \Lambda,$ choose $\{a_I(\alpha)\in k\}_{ I\in\mathcal{T}_d}$ such that $Q(\alpha)$ is the hypersurface determined by the equation $\sum_{I\in\mathcal{T}_d}a_I(\alpha)x^I=0.$ If there is no confusion, we use the same notation $Q$ to denote the homogeneous polynomial in  $\mathcal{R}^0_{\Lambda}[X_0,\ldots, X_M]$ defined by  
$$Q(\alpha):=\sum_{I\in\mathcal{T}_d}a_I(\alpha)x^I, \;\text{for all}\; \alpha\in\Lambda.$$

Given moving hypersurfaces $Q_1(\alpha),\ldots, Q_q(\alpha), (\alpha\in \Lambda)$ in $\mathbb{P}^M(k)$ respectively of degrees $d_1,\ldots, d_q,$ choose $a_{j,I}(\alpha)\in k, j=1,\ldots, q, I\in\mathcal{T}_{d_j}$ such that 
$Q_j(\alpha)$ is given by $\sum_{I\in\mathcal{T}_{d_j}}a_{j,I}(\alpha)x^I=0.$ Set
$$ m_j:=|\mathcal{T}_{d_j}|-1, j=1,\ldots, q.$$
Numbering the elements of the set $\mathcal{T}_{d_j}$ from 0 to $m_j, 1\leq j\leq q.$

\begin{definition}{\label{1.1}} An infinite subset $A\subset\Lambda$ is said to be coherent with respect to $\{Q_j\}_{j=1}^q$ if for every polynomial $P\in k[X_{1,0},\ldots,X_{1,m_1},X_{2,0}, \ldots, X_{q,m_q}]$ which is homogeneous in $X_{j,0},\ldots,X_{j,m_j}$ for each $j=1,\ldots,q,$ either $P(a_{1,0}(\alpha),\ldots,a_{q,m_q}(\alpha))$ vanishes for all $\alpha\in A$, or it vanishes for only finitely many $\alpha\in A.$
\end{definition}
\begin{remark} The above definition is independent of the choice of coefficients $a_{j,I}(\alpha)\in k, j=1,\ldots, q, I\in\mathcal{T}_{d_j}$.
\end{remark}
\begin{lemma} There exists an infinite subset $A\in\Lambda$ which is coherent with respect to $\{Q_j\}_{j=1}^q$.
\end{lemma}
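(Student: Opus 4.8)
The plan is to recast the statement as a geometric minimality argument. For every $j$ and every $\alpha\in\Lambda$ the tuple $(a_{j,I}(\alpha))_{I\in\mathcal{T}_{d_j}}$ is nonzero, since $Q_j(\alpha)$ is a genuine hypersurface of degree $d_j$; hence it represents a point of $\mathbb{P}^{m_j}(k)$, and $\alpha\mapsto a(\alpha):=\big((a_{1,I}(\alpha))_I,\dots,(a_{q,I}(\alpha))_I\big)$ is a map $a\colon\Lambda\to W(k)$, where $W:=\mathbb{P}^{m_1}\times\cdots\times\mathbb{P}^{m_q}$. A polynomial $P\in k[X_{1,0},\dots,X_{q,m_q}]$ that is homogeneous in each block $X_{j,0},\dots,X_{j,m_j}$ cuts out a well-defined closed subvariety $V(P)\subseteq W$, and $P\big(a_{1,0}(\alpha),\dots,a_{q,m_q}(\alpha)\big)=0$ if and only if $a(\alpha)\in V(P)$; this is exactly the independence-of-coefficients content of the Remark above. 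The crucial structural fact is that $W$, being a projective variety, has Noetherian underlying topological space.

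Rather than enumerating polynomials and diagonalizing, I would produce $A$ in a single step. Consider the family $\mathcal{F}$ of closed subvarieties $Z\subseteq W$ for which $\{\alpha\in\Lambda:a(\alpha)\in Z\}$ is infinite. It is nonempty, because $W\in\mathcal{F}$ ($\Lambda$ being infinite); since $W$ is Noetherian, the nonempty family $\mathcal{F}$ of closed subsets has a minimal element $Z_0$. This $Z_0$ is irreducible: were $Z_0=Z'\cup Z''$ a union of two proper closed subsets, then $\{\alpha:a(\alpha)\in Z_0\}$ would be the union of $\{\alpha:a(\alpha)\in Z'\}$ and $\{\alpha:a(\alpha)\in Z''\}$, so one of these would be infinite, contradicting minimality of $Z_0$. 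I then set $A:=\{\alpha\in\Lambda:a(\alpha)\in Z_0\}$, which is infinite by the definition of $\mathcal{F}$.

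To check that $A$ is coherent, let $P$ be homogeneous in each block and suppose $P(a(\alpha))=0$ for infinitely many $\alpha\in A$. Each such $\alpha$ satisfies $a(\alpha)\in Z_0\cap V(P)$, so $Z_0\cap V(P)\in\mathcal{F}$; since $Z_0\cap V(P)\subseteq Z_0$, minimality of $Z_0$ forces $Z_0\cap V(P)=Z_0$, i.e. $P$ vanishes identically on $Z_0$. Consequently $a(\alpha)\in Z_0\subseteq V(P)$ for every $\alpha\in A$, so $P(a(\alpha))=0$ for all $\alpha\in A$. Hence, for every admissible $P$, either $P(a(\alpha))=0$ for all $\alpha\in A$ or for only finitely many $\alpha\in A$, which is precisely the coherence condition.

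The only delicate step is the setup in the first paragraph: once one has identified $W$ as the right ambient space and observed that the polynomials in the definition are exactly its multi-homogeneous forms, the proof is driven entirely by Noetherian minimality and is essentially formal. It is worth noting why this route is preferable to the more familiar one: since $k$ is a number field, the set of admissible $P$ is countable, so one could instead build a nested sequence $\Lambda\supseteq A_1\supseteq A_2\supseteq\cdots$ of infinite sets deciding one $P$ at a time and then choose distinct points $\alpha_i\in A_i$; but the resulting set $\{\alpha_i\}$ only guarantees that each $P$ vanishes on a cofinite, or on a finite, subset of it. The minimality argument is what promotes ``cofinite'' to ``all'', matching the statement verbatim.
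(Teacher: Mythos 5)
Your proof is correct and is essentially the Noetherian-minimality argument of Ru--Vojta's Lemma 1.1, which the paper cites without reproducing: view $\alpha\mapsto a(\alpha)$ as a map into the Noetherian space $\mathbb{P}^{m_1}\times\cdots\times\mathbb{P}^{m_q}$, take a minimal Zariski-closed subset $Z_0$ whose preimage is infinite, and let $A$ be that preimage. Your closing remark correctly identifies why a countable-enumeration/diagonal construction only yields cofinite rather than universal vanishing and hence why minimality is indispensable; this is the same point that makes the Ru--Vojta argument go through.
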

The proof of this lemma is similar to the proof of Lemma 1.1 in \cite{RV} without any modifications.

Let $A\subset\Lambda$ be an infinite index subset which is coherent with respect to $\{Q_j\}_{j=1}^q$. If $j\in\{1,\ldots,q\}$ and $\mu,\nu\in\{0,\ldots,m_j\}$ are such that $a_{j,\nu}(\alpha)\not=0$ for at least one $\alpha\in A,$ then the set $\{\alpha\in A|a_{j,\nu}(\alpha)\not=0\}$ has finite complement in $A$ by coherence. Hence, the pair 
$$\{\alpha\in A|a_{j,\nu}(\alpha)\not=0\}\longrightarrow k, \alpha\longrightarrow a_{j,\mu}(\alpha)/a_{j,\nu}(\alpha)$$ lies in $\mathcal{R}_A^0.$
 Moreover, the subring of $\mathcal{R}^0_A$ generated over $k$ by all such pairs is entire. In particularly, for all $a$ belongs to this subring $a(\alpha)=0$ for all $\alpha\in A$ or for only finitely $\alpha\in A$.

\begin{definition}
We define $\mathcal{R}_{A,\{Q_j\}_{j=1}^q}$ to be the quotient field of the above-mentioned entire subring. 
\end{definition}

Note that the field $\mathcal{R}_{A,\{Q_j\}_{j=1}^q}$ is independent of the choice of coefficients.
\begin{remark}{\label{re:1.3}} Let $B\subset A\subset\Lambda$ be two infinite index subsets. Then it's clear that if $A$ is coherent then so is $B$, and $\mathcal{R}_{A,\{Q_j\}^q_{j=1}}=\mathcal{R}_{B,\{Q_j\}^q_{j=1}}.$ 
\end{remark}

Let  $V$ be an irreducible projective subvariety of  $\mathbb{P}^M$ defined over $k$ of dimension $n$. Let $I_V$ be the homogeneous prime ideal of  $k[X_0,\ldots, X_M]$ consisting of all polynomials vanishing identically on  $V$. Let $A\subset \Lambda$ be coherent with respect to $\{Q_j\}_{j=1}^q$. Denote by  $I_{A, V, \{Q_j\}_{j=1}^q}$ the ideal of the ring $\mathcal{R}_{A,\{Q_j\}^q_{j=1}}[X_0,\ldots, X_M]$ generated by $I_V$. 

\begin{definition}
Let $x:\Lambda\longrightarrow V(k)$ be a collection of points. We say that $x$ is  algebraically non-degenerate over $V$ and $ \mathcal{R}_{\{Q_j\}_{j=1}^q}$ if for all infinite subsets $A\subset\Lambda$ that are coherent with respect to $\{Q_j\}_{j=1}^q$, there is no homogeneous polynomial $Q\in \mathcal{R}_{A,\{Q_j\}_{j=1}^q}[X_0,\ldots,X_M]\backslash I_{A, V, \{Q_j\}_{j=1}^q}$ such that $Q(x_o(\alpha),\ldots,x_M(\alpha))=0,$ for all $\alpha\in A$ outside a finite subset of $A$.
\end{definition}

\begin{definition}
Let  $m\geq n$ be a positive integer. We say that a set $\{Q_j\}^q_{j=1}, (q\geq m+1)$ of homogeneous polynomials in $\mathcal{R}^0_{\Lambda}[X_0,\ldots, X_M]$ is in m-subgeneral with respect to $V$ if there exists an infinite subset $A\subset\Lambda$ with finite complement such that for any $1\leq j_0<\cdots< j_m\leq q,$ and $\alpha\in A,$  the system of equations
\begin{align*}
Q_{j_i}(\alpha)(x_0,\ldots,x_M)=0, 0\leq i\leq m
\end{align*}
has no solutions in $V(\bar{k})$, in which $\bar{k}$ is an algebraic closure of k. 
\end{definition}
Our main result is stated as following
\vskip0.2cm
\noindent
\textbf{Main Theorem.}\label{Main}
{\it Let k be a number field, $M_k^\infty \subset S$ be a finite set of places of k, let $q,m,n$ be positive integers with $q>m\geq n$ and $\epsilon>0.$ Let $\Lambda$ be an infinite index set, let $Q_1,\ldots, Q_q$ be moving hypersurfaces in $\mathbb{P}^M(k)$ respectively of degrees $d_1, \ldots, d_q$.  Let  $V$ be an irreducible projective subvariety of  $\mathbb{P}^M$ defined over $k$ of dimension $n$ and let $x: \Lambda\longrightarrow V(k)$ be a collection of points such that:

 (1) The family of  polynomials $Q_1,\ldots, Q_q$ is in m-subgeneral position with respect to $V$;
 
 (2)  x is algebraically non-degenerate over $V$ and $ \mathcal{R}_{\{Q_j\}_{j=1}^q}$;
 
 (3) $ h(Q_j(\alpha))=o(h(x(\alpha)))$ for all  $j=1,\ldots, q.$
 
 Then there exists an infinite index subset $A\subset\Lambda$ such that
$$\sum_{v\in S}\sum_{j=1}^q\dfrac{1}{d_j}\lambda_{Q_j(\alpha),v}(x(\alpha))\leq (m(n+1)+\epsilon)h(x(\alpha)),$$
for all $\alpha\in A.$}

Notice that when $m=n$, i.e, the family of polynomials is in general position with respect to  $V$, our result is weaker than Son-Tan-Thin's result \cite{STT}.
\section{Some Lemmas}
In this section, we will state some lemmas needed for the proof of the Main Theorem.

 Masser and Wustholz  \cite{MW} proved a simple lemma on the solutions of a system of linear equations over  algeraic function fields. We restate it in the number field setting.

Let $k$ be a number field. For  positive integers  $p$ and $q$, we consider the system
\begin{align} \label{e:201}
a_{j1}x_1+\cdots+a_{jp}x_p=0\;\;\; (1\leq j \leq q), 
\end{align}
where $a_{ij}\in k$ not all zero $(1\leq i\leq p, 1\leq j \leq q).$
\begin{lemma}\label{l:21} For an integer t with  $1\leq t \leq p,$ suppose that the system of \eqref{e:201} has a solution $x_1,\ldots, x_p$ in $k$ such that $x_t\not=0$. Then, the system \eqref{e:201} has a solution $x_1,\ldots, x_p$ in $k$ with $x_t\not=0$ and $x_i$ is either a certain signed minor of the matrix $(a_{ij})_{i,j}$ or $0$. Furthermore, for all $1\leq i\leq p$, $v\in M_k,$
$$ \|x_i\|_v \leq \epsilon_v^{n_v}(p^2) \max_{i, j}(\|a_{ij}\|_v,1)^p .$$
\end{lemma}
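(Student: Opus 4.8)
\textbf{Proof plan for Lemma \ref{l:21}.}

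The plan is to reduce the system to an essential square (or nearly square) subsystem and then solve it by Cramer's rule, which automatically produces solutions whose entries are signed minors of $(a_{ij})$, and whose $v$-adic sizes are controlled by the Laplace expansion of a determinant.

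First I would pass to a maximal linearly independent subset of the rows of the coefficient matrix $(a_{ij})$, say $r$ rows, so that $r = \mathrm{rank}(a_{ij}) \le \min(p,q)$ and the solution space of \eqref{e:201} equals the solution space of this $r\times p$ subsystem. Since by hypothesis there is a solution with $x_t\neq 0$, we must have $r < p$ (otherwise the only solution would be zero), so we may choose $r$ columns among the remaining $p-1$ columns (those other than the $t$-th) such that, together with the $t$-th column, they — more precisely: choose columns so that the $r\times r$ submatrix $B$ formed by the $r$ chosen rows and some $r$ columns not including column $t$ is nonsingular. Concretely, among the $r$ independent rows, the submatrix on all columns has rank $r$; deleting column $t$ still leaves rank $r$ (since column $t$ lies in the span of the others within this row space, as a nonzero $x_t$-solution exists), so an invertible $r\times r$ block $B$ avoiding column $t$ can be selected. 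Set all free variables except $x_t$ equal to $0$, set $x_t = \det B$ (nonzero), and solve the resulting square system $B\mathbf{y} = -x_t\,(\text{column }t\text{ restricted to the chosen rows})$ for the $r$ chosen coordinates by Cramer's rule. Each such coordinate is then $\det B^{-1}$ times a determinant whose entries are entries of $(a_{ij})$; clearing the common factor $\det B$ (i.e. scaling the whole solution by $\det B$, which is legitimate since $k$ is a field and the system is homogeneous) makes every $x_i$ equal to $\pm$ a certain minor of $(a_{ij})$ of size $\le r \le p$, or $0$, with $x_t = \pm\det B \neq 0$.

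For the height bound, each nonzero $x_i$ is an $r'\times r'$ minor with $r'\le p$, hence a sum of $r'! \le p!$ terms, each a product of $r'\le p$ entries $a_{ij}$. By the ultrametric/triangle inequality written in the uniform form $\|a_1+\cdots+a_N\|_v \le \epsilon_v^{n_v}(N)\max_i\|a_i\|_v$ from the introduction, and using $\|ab\|_v = \|a\|_v\|b\|_v$, we get
$$\|x_i\|_v \le \epsilon_v^{n_v}(p!)\,\max_{i,j}(\|a_{ij}\|_v,1)^p.$$
Finally one absorbs the constant: for $v$ non-archimedean $\epsilon_v(p!) = 1$, while for $v$ archimedean one uses the crude bound $p! \le p^2$ is \emph{false} in general, so instead I would note that the Laplace expansion along a single row reduces an $r'\times r'$ determinant to $r'\le p$ terms, and iterate, or simply replace the bound by $\epsilon_v^{n_v}(p^p)$ and then observe $p^p$ can be replaced by a cleaner expression; to match the stated $\epsilon_v^{n_v}(p^2)$ one expands the minor by rows into $\le p$ summands each a product of one matrix entry with a smaller minor and argues by induction on size that the number of monomials is at most $p\cdot p \cdots$, i.e. that at each of the $\le p$ expansion steps the count multiplies by $\le p$, giving after accounting $\epsilon_v(p)^{p}$; absorbing this into $\epsilon_v(p^2)$-type notation is the bookkeeping to be done carefully.

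The main obstacle is purely the constant: getting the combinatorial factor down to exactly $\epsilon_v^{n_v}(p^2)$ rather than the naive $\epsilon_v^{n_v}(p!)$ requires expanding the determinant \emph{along a single row at a time} so that each of the $\le p$ successive expansions contributes only a factor of $p$ (number of terms) rather than expanding all at once; packaged right, this yields a factor $\epsilon_v(p)^{\le p}$, which for archimedean $v$ is $p^{\le p}$ — still not $p^2$. In fact the cleanest route is to bound each $r'\times r'$ minor directly as a sum of at most $(r')!$ products, use $(r')!\le p^p$, and accept the slightly weaker $\epsilon_v^{n_v}(p^p)$; since in all applications this constant is irrelevant (it contributes $o(h(x(\alpha)))$), I would state and prove the bound in whichever form is least painful, noting that $\epsilon_v^{n_v}(p^2)$ as written should be read with this understanding. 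Everything else — the rank reduction, the Cramer's-rule solution being built from signed minors, the multiplicativity of $\|\cdot\|_v$ — is routine linear algebra requiring no further input.
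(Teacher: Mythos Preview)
Your approach is essentially the paper's: reduce to the rank $l$ of the system, express the pivot variables via Cramer's rule so that every coordinate of the solution is either $0$ or a signed $l\times l$ minor, and then bound those minors termwise. The paper, rather than forcing column $t$ out of the pivot block as you do, permutes the unknowns so that the first $l$ are pivots and then splits into two cases according to whether $t\le l$ or $t>l$; both devices amount to the same thing.

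Your worry about the constant is well founded. The paper simply asserts $\|\delta\|_v,\|\delta_{ij}\|_v\le \epsilon_v^{n_v}(l^2)\max_{i,j}\|a_{ij}\|_v^{\,l}$ for $l\times l$ minors, without further argument; taken at face value for archimedean $v$ this would require $l!\le l^2$, which fails for $l\ge 4$. The honest bound from the Leibniz expansion is $\epsilon_v^{n_v}(l!)$ (or, via Hadamard, $\epsilon_v^{n_v}(l^{l/2})$), exactly as you say. Since every use of Lemma~\ref{l:21} in the paper only needs a factor of the form $\epsilon_v^{n_v}(C(p))$ with $C(p)$ depending on $p$ alone---and this is immediately absorbed into $o(h(x(\alpha)))$ terms---the discrepancy is cosmetic. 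So do not try to force the $p^2$; write the bound with $\epsilon_v^{n_v}(p!)$ (or $p^p$) and move on.
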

\begin{proof} Let  $l$ be the rank of the system \eqref{e:201}.  If $l = p$ then
the system has a unique solution $x_1 =\cdots= x_p = 0$ which contradicts the existence of a solution
with $x_t\not =0$. Therefore $1\leq l \leq p-1$ and that the system is equivalent to (possibly after permuting the unknowns)
\begin{align}\label{e:202}
\delta x_i=\delta_{i, l+1} x_{l+1}+\cdots+\delta_{jp}x_p\;\;\; (1\leq i \leq l)
\end{align}
where $\delta (\not=0)$ and $\delta_{ij} (1\leq i \leq l, l+1\leq j\leq p)$ are certain signed minors of order $l$. We have, for all $v\in M_k,$
$$ \|\delta\|_v,\|\delta_{ij}\|_v \leq \epsilon_v^{n_v}(l^2) \max_{i, j}\|a_{ij}\|_v^l \leq \epsilon_v^{n_v}(p^2) \max_{i, j}(\|a_{ij}\|_v,1)^p.$$
We distinguish two cases. Firstly, suppose $1\leq t \leq l $. Then $\delta_{ts}\not=0$ for some $l+1\leq s \leq p$; otherwise, \eqref{e:202} implies that $x_t=0$ for all solutions of \eqref{e:201}. Then, the required solution of \eqref{e:201} can be taken to
$$ x_s=\delta, x_j=0 (l+1\leq j \leq p, j\not=s), x_i=\delta_{is} (1\leq i \leq l). $$
Secondly, suppose $l+1\leq t \leq p.$ Then the required solution is
$$ x_t=\delta, x_j=0 (l+1\leq j \leq p, j\not=t), x_i=\delta_{it} (1\leq i \leq l). $$
This proves the lemma.
\end{proof}
Let $\{Q_j\}_{j=1}^q$ be  a set of homogeneous polynomials in $\mathcal{R}^0_\Lambda[X_0,\ldots, X_M].$ Let $A\subset \Lambda$  be coherent with respect to $\{Q_j\}_{j=1}^q$.
\begin{lemma}\label{l:22}
 We consider the system
\begin{align*}
a_{j1}x_1+\cdots+a_{jp}x_p=0\;\;\; (1\leq j \leq q), 
\end{align*}
where  $a_{ji}\in \mathcal{R}_{A,\{Q_j\}_{j=1}^q}$. If the above system has a solution $x_1,\ldots, x_p$ in $ \mathcal{R}^0_{\Lambda}$ such that $x_t\not=0$. Then, the system has a solution $x_1,\ldots, x_p$ in $\mathcal{R}_{A,\{Q_j\}_{j=1}^q}$ with $x_t\not=0.$ Furthermore, for all $v\in M_k$, $1\leq i\leq p, \alpha \in A$
$$ \|x_i(\alpha)\|_v\leq \epsilon_v^{n_v}(p^2) \max_{i,j}(\|a_{ij}(\alpha)\|_v,1)^p. $$
\end{lemma}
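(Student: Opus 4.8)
The plan is to reduce Lemma~\ref{l:22} over the ring $\mathcal{R}^0_\Lambda$ (respectively the field $\mathcal{R}_{A,\{Q_j\}_{j=1}^q}$) to the number-field statement of Lemma~\ref{l:21}, applied pointwise for each $\alpha\in A$. The key observation is that elements of $\mathcal{R}_{A,\{Q_j\}_{j=1}^q}$ are represented by set-theoretic maps $A\to k$ (defined outside a finite subset of $A$), so evaluating the coefficients $a_{ji}$ at a fixed $\alpha$ produces an honest linear system over $k$ to which Lemma~\ref{l:21} applies.

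First I would fix, for each $\alpha\in A$ outside the finite exceptional set where some representative is undefined, the linear system over $k$ with coefficients $a_{ji}(\alpha)$. By hypothesis the system over $\mathcal{R}^0_\Lambda$ has a solution $x_1,\dots,x_p$ with $x_t\neq 0$; since $x_t\in\mathcal{R}^0_\Lambda$ is nonzero, it follows that $x_t(\alpha)\neq 0$ for all $\alpha\in A$ outside a finite subset (this uses only the ring structure of $\mathcal{R}^0_\Lambda$: a nonzero element is a pair $(C,a)$ with $a$ not identically zero on $C$, but here we need $a(\alpha)\neq 0$ for cofinitely many $\alpha$, which holds because the representative of a nonzero element of $\mathcal{R}^0_\Lambda$ in the given equivalence class can be taken to vanish nowhere on a cofinite subset). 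Thus for cofinitely many $\alpha$ the $k$-linear system $\sum_i a_{ji}(\alpha)x_i=0$ has a solution with the $t$-th coordinate nonzero. Apply Lemma~\ref{l:21} with this $t$: for each such $\alpha$ we obtain a solution in $k$ whose entries are signed minors of $(a_{ji}(\alpha))$ and which satisfies the bound $\|x_i(\alpha)\|_v\leq\epsilon_v^{n_v}(p^2)\max_{i,j}(\|a_{ij}(\alpha)\|_v,1)^p$.

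Next I would assemble these pointwise solutions into a single solution over $\mathcal{R}_{A,\{Q_j\}_{j=1}^q}$. The point is that the entries produced by Lemma~\ref{l:21} are \emph{fixed polynomial expressions} (signed minors of a chosen rank-$l$ submatrix) in the $a_{ji}$, once we know which $t$ we are using and after the permutation of unknowns is performed. The rank $l$ of the system, and the relevant minors, are determined by which polynomial combinations of the $a_{ji}$ vanish identically on $A$; by coherence with respect to $\{Q_j\}_{j=1}^q$, each such polynomial combination either vanishes for all $\alpha\in A$ or for only finitely many, so the rank $l$, the choice of nonvanishing $l\times l$ minor $\delta$, and the index $s$ can be taken to be the same for cofinitely many $\alpha$. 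Shrinking $A$ to the corresponding cofinite subset (which is permissible by Remark~\ref{re:1.3}, since $\mathcal{R}_{A,\{Q_j\}_{j=1}^q}$ is unchanged), the formulas $x_s=\delta$, $x_j=0$, $x_i=\delta_{is}$ of the lemma's proof define elements of $\mathcal{R}_{A,\{Q_j\}_{j=1}^q}$ — they are polynomials in the $a_{ji}$, hence lie in the entire subring whose quotient field is $\mathcal{R}_{A,\{Q_j\}_{j=1}^q}$ — and they furnish the required solution with $x_t\neq 0$. The height bound for each $\alpha$ is then exactly the bound from Lemma~\ref{l:21} evaluated at $\alpha$.

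The main obstacle I anticipate is the bookkeeping needed to guarantee that the same combinatorial data (the value of $l$, the surviving minor $\delta$, the column index $s$, and the permutation of unknowns) works uniformly for all but finitely many $\alpha\in A$; this is exactly where coherence is essential, and one must be careful that the relevant "does this minor vanish identically on $A$" questions are all governed by homogeneous polynomial identities in the coefficients $a_{j,I}$, so that the dichotomy in Definition~\ref{1.1} applies. Once that uniformity is secured, the rest is a direct transcription of the proof of Lemma~\ref{l:21}, and the norm estimate carries over verbatim since it was already proved place-by-place.
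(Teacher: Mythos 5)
Your overall strategy --- apply Lemma~\ref{l:21} pointwise, use coherence to make the rank $l$, the nonvanishing $l\times l$ minor $\delta$, the column index $s$, and the permutation of unknowns uniform on a cofinite subset of $A$, and then read off the solution as polynomial (minor) expressions in the $a_{ji}$ --- is the same as the paper's, and it does work. But there is a genuine error in the justification of your first step. You assert that $x_t\neq 0$ in $\mathcal{R}^0_\Lambda$ forces $x_t(\alpha)\neq 0$ for \emph{cofinitely} many $\alpha$, on the grounds that a nonzero class in $\mathcal{R}^0_\Lambda$ has a representative vanishing nowhere on a cofinite subset. This is false: $\mathcal{R}^0_\Lambda$ identifies maps that agree on a cofinite set, so a nonzero class may still vanish on an infinite set whose complement is also infinite (e.g.\ for $\Lambda=\mathbb{N}$, the indicator function of the even integers), and no choice of representative can remove infinitely many zeros. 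All that $x_t\neq 0$ in $\mathcal{R}^0_\Lambda$ gives you is that $\{\alpha: x_t(\alpha)\neq 0\}$ is \emph{infinite}. The dichotomy ``vanishes everywhere or only finitely often'' is a feature of the subring whose fraction field is $\mathcal{R}_{A,\{Q_j\}_{j=1}^q}$, where it comes from coherence; it does not hold in $\mathcal{R}^0_\Lambda$.

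The repair is standard and is in fact what the paper does implicitly: replace $A$ by the infinite subset $A'=\{\alpha\in A: x_t(\alpha)\neq 0\}$, which by Remark~\ref{re:1.3} is still coherent with $\mathcal{R}_{A',\{Q_j\}_{j=1}^q}=\mathcal{R}_{A,\{Q_j\}_{j=1}^q}$, and work over $A'$. Alternatively, note that your construction only needs a \emph{single} $\alpha$ with $x_t(\alpha)\neq 0$: once coherence has pinned down $l$ and the surviving $l\times l$ minor on a cofinite set, the failure mode of Lemma~\ref{l:21} (all $\delta_{ts}$ identically zero) would force $x_t(\alpha)=0$ for every $k$-solution at every such $\alpha$, which one witness $\alpha$ with $x_t(\alpha)\neq 0$ already contradicts; coherence then upgrades ``some $\delta_{ts}(\alpha)\neq 0$'' to ``$\delta_{ts}(\alpha)\neq 0$ for cofinitely many $\alpha$.'' With that fix, the remainder of your argument --- that the signed-minor formulas land in $\mathcal{R}_{A,\{Q_j\}_{j=1}^q}$, and that the norm bound of Lemma~\ref{l:21} is inherited pointwise --- is correct and matches the paper.
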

\begin{proof}
Without loss of generality, we can assume that $x_t(\alpha)\not=0$ for all $\alpha \in A.$ 
Set $l=\max_\alpha\rank (a_{ij}(\alpha))$. Then $l<p.$ If $l=\rank (a_{ij}(\alpha))$ then there exists a non-zero minor of order $l$ of matrix $(a_{ij}(\alpha))$. By coherence of $A$, such the minor is different from 0 for all but finitely many $\alpha \in A$. Using the similar arguments as in proof of Lemma \ref{l:21}, we have the required solution.
\end{proof}
Let  $V$ be an irreducible projective subvariety of  $\mathbb{P}^M$ defined over $k$ of dimension $n$ and degree $\triangle$. Let $I_V$ be the homogeneous prime ideal of  $k[X_0,\ldots, X_M]$ consisting of all polynomials vanishing identically on  $V$. Let $P_1,\ldots, P_r$ be generators of $I_V.$ Denote by  $I_{A, V, \{Q_j\}_{j=1}^q}$ the ideal of the ring $\mathcal{R}_{A,\{Q_j\}^q_{j=1}}[X_0,\ldots, X_M]$ generated by $I_V$. 

\begin{lemma}\label{l:23}
Let $P\in \mathcal{R}_{A, \{Q_j\}_{j=1}^q}[X_0,\ldots, X_M]$ be a homogeneous polynomial of degree $d$. Then $P(\alpha)\in I_V$ for all but finitely $\alpha \in A$ if and only if $P\in I_{A, V, \{Q_j\}_{j=1}^q}.$
\end{lemma}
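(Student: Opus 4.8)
The plan is to prove both implications, with the forward direction being the substantive one. For the easy direction, suppose $P \in I_{A,V,\{Q_j\}_{j=1}^q}$. Then I can write $P = \sum_{i=1}^r h_i P_i$ with $h_i \in \mathcal{R}_{A,\{Q_j\}_{j=1}^q}[X_0,\ldots,X_M]$. Each coefficient of each $h_i$ lies in $\mathcal{R}_{A,\{Q_j\}_{j=1}^q}$, hence is represented by a pair $(C,a)$ with $C \subset A$ of finite complement; intersecting the finitely many such $C$'s, there is a subset $C_0 \subset A$ of finite complement on which all these coefficient-functions are defined, and on which the identity $P(\alpha) = \sum_i h_i(\alpha) P_i$ holds as polynomials in $k[X_0,\ldots,X_M]$. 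Since $P_i \in I_V$, we get $P(\alpha) \in I_V$ for all $\alpha \in C_0$, i.e. for all but finitely many $\alpha \in A$.

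For the forward direction, suppose $P(\alpha) \in I_V$ for all $\alpha \in A$ outside a finite set. Write $P = \sum_{I} c_I X^I$ with $c_I \in \mathcal{R}_{A,\{Q_j\}_{j=1}^q}$, ranging over $I \in \mathcal{T}_d$. The idea is to treat membership in $I_V$ as a linear-algebra condition: since $P_1,\ldots,P_r$ generate $I_V$, a homogeneous polynomial $F$ of degree $d$ lies in $I_V$ if and only if $F$ lies in the $k$-vector space $(I_V)_d$ spanned by the degree-$d$ multiples $\{X^J P_i\}$ of the generators; equivalently, writing $F$ in coordinates $(c_I)_{I \in \mathcal{T}_d}$, the vector $(c_I)$ must be a $k$-linear combination of the (finitely many) coordinate vectors of the $X^J P_i$. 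So I set up the linear system expressing "$(c_I)_{I}$ is a linear combination of the generator-vectors with coefficients $y_1, y_2, \ldots$" — this is a system of the form $\sum_\ell b_{I\ell}\, y_\ell = c_I$ over the field $\mathcal{R}_{A,\{Q_j\}_{j=1}^q}$, where the $b_{I\ell} \in k$ are the (constant) coefficients coming from the $X^J P_i$. The hypothesis says that for all but finitely many $\alpha \in A$, the specialized system $\sum_\ell b_{I\ell}\, y_\ell = c_I(\alpha)$ has a solution in $k$. Clearing denominators and rewriting this as a homogeneous system in the style of Lemma \ref{l:22} (moving $c_I$ to the left as an extra unknown with a fixed nonzero value), I can invoke Lemma \ref{l:22}: since the system has a solution in $\mathcal{R}^0_\Lambda$ — indeed, for each $\alpha$ outside a finite set a solution in $k$ exists, and by coherence these fit together into an element of $\mathcal{R}^0_\Lambda$ after passing to a further subset of finite complement — it has a solution $y_\ell \in \mathcal{R}_{A,\{Q_j\}_{j=1}^q}$. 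Such a solution exhibits $P = \sum_\ell y_\ell \cdot (\text{generator polynomial}_\ell)$ as an element of $I_{A,V,\{Q_j\}_{j=1}^q}$.

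The main obstacle I anticipate is the careful bookkeeping needed to apply Lemma \ref{l:22}: that lemma requires the system to have a solution in $\mathcal{R}^0_\Lambda$ with a prescribed nonzero entry, so I must first assemble the pointwise solutions $\{y_\ell(\alpha)\}$ (which exist in $k$ for cofinitely many $\alpha$) into a genuine element of $\mathcal{R}^0_\Lambda$. This requires choosing the solutions coherently — which is exactly what one gets by using the explicit Cramer-type solution from Lemma \ref{l:21}/\ref{l:22}, whose entries are signed minors of the coefficient matrix and hence are themselves elements of $\mathcal{R}_{A,\{Q_j\}_{j=1}^q}$ (or zero) once one fixes which minor of maximal rank to use, valid on a subset of finite complement by coherence. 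A secondary technical point is the standard fact that homogeneity of degree $d$ lets one reduce membership in $I_V$ to membership in the finite-dimensional piece $(I_V)_d$, so that only finitely many generator-multiples $X^J P_i$ are involved and the linear system is genuinely finite; I would state this explicitly and then the rest is the coherence argument above.
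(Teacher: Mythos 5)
Your proposal is correct and is essentially the same argument as the paper's. The paper writes the identity $a(\alpha)P(\alpha)=\sum_i A_i(\alpha)P_i$ with $a\equiv 1$, restricts the $A_i$ to be homogeneous of the right degree, and reads this off as a finite linear system in the coefficients of the $A_i$ and of $a$, with $a\neq 0$, to which Lemma~\ref{l:22} applies; you express the same linear system directly in coordinates as $\sum_\ell b_{I\ell}\,y_\ell=c_I$ with the auxiliary unknown playing the role of $a$. Both reduce the "only if" direction to Lemma~\ref{l:22}, and both handle the "if" direction by observing that a representation $P=\sum h_iP_i$ over $\mathcal{R}_{A,\{Q_j\}_{j=1}^q}$ specializes on a cofinite subset of $A$.

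One small point worth correcting in your bookkeeping: you do not need coherence to "assemble the pointwise solutions into a genuine element of $\mathcal{R}^0_\Lambda$." Any set-theoretic choice of a solution $y_\ell(\alpha)\in k$ for each $\alpha$ in a cofinite subset of $A$ already defines a pair $(C,y_\ell)$ and hence an element of $\mathcal{R}^0_\Lambda$; this is all that Lemma~\ref{l:22}'s hypothesis asks for, since it requires a solution in $\mathcal{R}^0_\Lambda$, not in the smaller field $\mathcal{R}_{A,\{Q_j\}_{j=1}^q}$. Coherence is used only \emph{inside} the proof of Lemma~\ref{l:22}, where a maximal nonzero minor is shown to stay nonzero cofinitely, so that the Cramer-type output solution lands in $\mathcal{R}_{A,\{Q_j\}_{j=1}^q}$. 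With this clarification your argument matches the paper's.
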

\begin{proof}
The part "if" is obvious. We prove the part "only if".

By passing to infinite subset of $A$, we can  assume that  $P(\alpha)\in I_V$ for all $\alpha \in A$. Since $P(\alpha)\in I_V$, there exist $A_1(\alpha),\ldots, A_r(\alpha)\in  k[X_0,\ldots, X_M]$ (which we may assume to be homogeneous) such that $$ P(\alpha)=A_1(\alpha)P_1+\ldots+A_r(\alpha)P_r.$$
We rewrite the above equation in the following form
\begin{align}\label{e:204}
a(\alpha) P(\alpha)=A_1(\alpha)P_1+\ldots+A_r(\alpha)P_r, 
\end{align}
where $a: A\longrightarrow k, a(\alpha)=1$ for all $\alpha \in A.$ Without loss of generality, we can assume that $A_i(\alpha)$ is homogeneous of  degree $d-\deg P_i$, $1\leq i \leq r$. We may regard \eqref{e:204} as a system of linear equations in coefficients of $A_i(\alpha), i=1,\ldots, r$ and $a(\alpha)$ with $a(\alpha)\not=0$. Thus, we can apply lemma \ref{l:22} to this system. Thus, we can choose $A_i\in \mathcal{R}_{A, \{Q_j\}_{j=1}^q}[X_0,\ldots, X_M], 0\not =a(\alpha)\in \mathcal{R}_{A, \{Q_j\}_{j=1}^q}$ satisfying \eqref{e:204}. Hence, $P\in I_{A, V, \{Q_j\}_{j=1}^q}.$
\end{proof}
\begin{lemma}\label{l:24}
   Let  $\{Q_j\}_{j=0}^m$ be a set of homogeneous polynomials  in $\mathcal{R}^0_{\Lambda}[X_0,\ldots, X_M]$ located in $m-$subgeneral position with respect to $V$. Let $A$ be coherent with respect to $\{Q_j\}$. Assume that all coefficients of $Q_j, 0\leq j\leq m$ belong to the field $\mathcal{R}_{A,\{Q_j\}_{j=0}^m}$. Then, for each $0\leq i\leq M$, there exist $r_i\in \mathbb{N}$, homogeneous polynomials $A_{il}$ of degree $r_i-\deg P_l$, $B_{ij}$ of degree $r_i-\deg Q_j$ in $ \mathcal{R}_{A,\{Q_j\}_{j=0}^m}[X_0,\ldots, X_M] (1\leq l \leq r, 0\leq j \leq m)$ such that
$$X_i^{r_i}=\sum_{l=1}^rA_{il}P_l+\sum_{j=0}^mB_{ij}Q_j, i=0,\ldots, M.$$
\end{lemma}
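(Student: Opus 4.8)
The plan is to reformulate the claim as a Nullstellensatz statement over the field $\mathcal{R}:=\mathcal{R}_{A,\{Q_j\}_{j=0}^m}$ and then to establish that statement by a specialization/properness argument. Precisely, the asserted identities hold for suitable $r_i$ if and only if, for each $i$, the power $X_i^{r_i}$ lies in the homogeneous ideal $\mathfrak{a}:=(P_1,\dots,P_r,Q_0,\dots,Q_m)\subset\mathcal{R}[X_0,\dots,X_M]$: since $\mathfrak{a}$ is generated by homogeneous elements, any homogeneous element of degree $d$ in $\mathfrak{a}$ is a combination $\sum A_l P_l+\sum B_j Q_j$ with $A_l,B_j$ homogeneous of the prescribed degrees $d-\deg P_l$ and $d-\deg Q_j$, so extracting the degree-$r_i$ component of an arbitrary expression $X_i^{r_i}=\sum g_{il}P_l+\sum h_{ij}Q_j$ produces the required $A_{il},B_{ij}$. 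By the projective Nullstellensatz over $\mathcal{R}$, some power of every $X_i$ lies in $\mathfrak{a}$ if and only if $P_1,\dots,P_r,Q_0,\dots,Q_m$ have no common zero in $\mathbb{P}^M(\overline{\mathcal{R}})$. Thus the whole lemma reduces to this no-common-zero statement, and the uniform exponents $r_i$ come out of the Nullstellensatz automatically.

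To prove the no-common-zero statement I would first clear denominators so that all coefficients of the $Q_j$ lie in a fixed finitely generated $k$-subalgebra $\mathcal{O}\subset\mathcal{R}$ which is an integral domain with $\mathrm{Frac}(\mathcal{O})=\mathcal{R}$; concretely $\mathcal{O}$ is generated over $k$ by the finitely many ratios $a_{j,\mu}/a_{j,\nu}$ used to build $\mathcal{R}$ (inverting one further nonzero element of $\mathcal{O}$ if needed to absorb the chosen coefficients of the $Q_j$), and replacing each $Q_j$ by a nonzero $\mathcal{R}$-multiple changes neither $\mathfrak{a}$ nor the hypotheses. Form the projective $\mathcal{O}$-scheme $W:=\mathrm{Proj}\bigl(\mathcal{O}[X_0,\dots,X_M]/(P_1,\dots,P_r,Q_0,\dots,Q_m)\bigr)$ with its structure morphism $\pi\colon W\to\mathrm{Spec}\,\mathcal{O}$, which is proper; hence $Z:=\pi(W)$ is a closed subset of the irreducible scheme $\mathrm{Spec}\,\mathcal{O}$. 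If $Z\ne\mathrm{Spec}\,\mathcal{O}$, then $Z$ omits the generic point $\eta$, whose residue field is $\mathcal{R}$, so the fibre $W_\eta=\mathrm{Proj}(\mathcal{R}[X_0,\dots,X_M]/\mathfrak{a})$ is empty, which is exactly the no-common-zero statement we want. So it suffices to exhibit one point of $\mathrm{Spec}\,\mathcal{O}$ outside $Z$.

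Such a point comes from an evaluation map. For all but finitely many $\alpha\in A$, coherence of $A$ with respect to $\{Q_j\}$ ensures that every generator $a_{j,\mu}/a_{j,\nu}$ of $\mathcal{O}$ is defined at $\alpha$ and that every polynomial relation among these generators persists under $\alpha\mapsto a_{j,\mu}(\alpha)/a_{j,\nu}(\alpha)$; hence $\mathrm{ev}_\alpha\colon\mathcal{O}\to k$ is a well-defined $k$-algebra homomorphism, giving a $k$-rational point $\mathfrak{m}_\alpha\in\mathrm{Spec}\,\mathcal{O}$. The fibre of $\pi$ over $\mathfrak{m}_\alpha$ is $\mathrm{Proj}\bigl(k[X_0,\dots,X_M]/(P_1,\dots,P_r,Q_0(\alpha),\dots,Q_m(\alpha))\bigr)$, and since $P_1,\dots,P_r$ cut out $V$ and, by hypothesis, $V(\bar{k})\cap\{Q_0(\alpha)=\cdots=Q_m(\alpha)=0\}=\emptyset$ for every $\alpha$ in a cofinite subset of $A$, the classical projective Nullstellensatz over $\bar{k}$ shows this fibre is empty, i.e.\ $\mathfrak{m}_\alpha\notin Z$. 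Therefore $Z\subsetneq\mathrm{Spec}\,\mathcal{O}$, and the proof is complete.

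The one genuinely delicate point — the main obstacle — is the bridge between the geometric hypothesis, which concerns common zeros over $\bar{k}$, and the field $\mathcal{R}$ over which the conclusion is needed: it is resolved by observing that $\mathfrak{m}_\alpha$ lies outside the actual scheme-theoretic image $Z$ of the proper map $\pi$ exactly when the geometric fibre is empty, so no subtlety about non-closed fields intervenes, while coherence is precisely what legitimizes the evaluations $\mathrm{ev}_\alpha$ and the relation-preservation. A reader wishing to avoid scheme-theoretic language may instead, once the no-common-zero fact is in hand, fix $r$ with $(X_0,\dots,X_M)^r\subset\mathfrak{a}\,\overline{\mathcal{R}}[X_0,\dots,X_M]$ and descend each membership $X_i^r\in\mathfrak{a}$ from $\overline{\mathcal{R}}$ to $\mathcal{R}$ by the linear-algebra argument of Lemma \ref{l:22}; but the no-common-zero statement itself cannot be bypassed and is the heart of the matter.
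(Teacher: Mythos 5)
Your argument is correct, and it takes a genuinely different route from the paper's. The paper establishes Lemma \ref{l:24} by applying the \emph{effective} Hilbert Nullstellensatz (Lemma \ref{l:25}) over $k$ to the specialization $P_1,\dots,P_r,Q_0(\alpha),\dots,Q_m(\alpha)$ at each individual $\alpha\in A$; the uniform degree bound $r_i(\alpha)\le(4d)^{M+1}$ in that lemma is then used to pass to an infinite subset on which $r_i(\alpha)$ is constant, and finally the resulting pointwise identity is viewed as a system of linear equations in the unknown coefficients of $\tilde A_{il}(\alpha),\tilde B_{ij}(\alpha)$ and an auxiliary $a_i(\alpha)$, so that Lemma \ref{l:22} produces a solution with coefficients in $\mathcal{R}_{A,\{Q_j\}_{j=0}^m}$. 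You instead prove directly that $P_1,\dots,P_r,Q_0,\dots,Q_m$ have no common zero in $\mathbb{P}^M(\overline{\mathcal{R}})$, by exhibiting $\mathrm{Proj}$ over $\mathrm{Spec}\,\mathcal{O}$ as a proper morphism, observing that the (closed) image misses each $\mathfrak{m}_\alpha$ by the $m$-subgeneral position hypothesis, and concluding it misses the generic point; a single application of the classical projective Nullstellensatz over the field $\mathcal{R}$ then yields $X_i^{r_i}\in\mathfrak{a}$ with no further descent step. What the paper's route buys is elementariness (only linear algebra plus an effective Nullstellensatz over $k$, no scheme theory) and, in principle, explicit control of the exponents $r_i$, which is consonant with the effective constants tracked later in Lemmas \ref{l:28}--\ref{l:29}; what your route buys is conceptual economy, since it bypasses both the effectivity of the Nullstellensatz and the linear-algebra descent of Lemma \ref{l:22}, at the cost of invoking properness of projective morphisms and generic-fibre reasoning. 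One point you pass over quickly but which does need the stated coherence of $A$: for the evaluation $\mathrm{ev}_\alpha\colon\mathcal{O}\to k$ at a \emph{fixed} $\alpha$ to be well-defined, one must know that an element of $\mathcal{O}$ with a canonical representative vanishing on a cofinite subset of $A$ in fact vanishes at every $\alpha$ where that representative is defined; this is exactly the ``vanishes for all $\alpha$ or only finitely many'' dichotomy the paper records for the entire subring, and it persists under the localization you perform to absorb denominators, so your evaluation map is legitimate.
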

\begin{proof}
Since $\{Q_j\}_{j=0}^m$ is in $m-$subgeneral position with respect to $V$, we have $P_1,\ldots, P_r; $\\
$ Q_0(\alpha),\ldots, Q_m(\alpha)$ have no common zeros in $\mathbb{P}^M(\bar{k})$ for all but finitely $\alpha\in A.$ By passing to infinite subset of $A$, we can assume that $P_1,\ldots, P_r; Q_0(\alpha),\ldots, Q_m(\alpha)$ have no common zeros in $\mathbb{P}^M(\bar{k})$ for all $\alpha\in A.$ 
We apply the following version of an effective Hilbert's Nullstellensatz.
\begin{lemma}\label{l:25}
 Let $L$ be an arbitrary field, $P_0, P_1,\ldots, P_l$ be homogeneous polynomials in $L(X_0,\ldots, X_M)$ of degree at most $d$ such that $P_0$ vanishes at all common zeros (if any) of $P_1,\ldots, P_l$ in the algeraic closure of $L.$ Then there exist  a positive integer $u\leq (4d)^{M+1}$ and homogeneous polynomials $Q_1,\ldots, Q_l$
such that $P_0^u=P_1Q_1+\ldots+P_lQ_l.$
\end{lemma}
Then, for each $ 0\leq i \leq M$, there exist  an integer $$r_i(\alpha)\leq (4\max_{i,l} \{\deg P_l, \deg Q_i\})^{M+1}$$
and homogeneous polynomials $\tilde{A}_{il}(\alpha)$ of degree $r_i(\alpha)-\deg P_l, \tilde{B}_{ij}(\alpha)$ of degree $r_i(\alpha)-\deg Q_j$ in $k[X_0,\ldots, X_M]$ ($ 1\leq l \leq r$, $ 0\leq j \leq m$)
 such that
$$ X_i^{r_i(\alpha)}=\sum_{l=1}^r\tilde{A}_{il}(\alpha)P_l+\sum_{j=0}^m\tilde{B}_{ij}(\alpha)Q_j(\alpha).$$
Since $r_i(\alpha)$ is bounded, by passing to infinite subset of $A$, we can assume that  $r_i(\alpha)$ is a constant denoted by $r_i.$ We rewrite the above equation, 
$$ a_i(\alpha)X_i^{r_i}=\sum_{l=1}^r\tilde{A}_{il}(\alpha)P_l+\sum_{j=0}^m\tilde{B}_{ij}(\alpha)Q_j(\alpha),$$
where $a_i: A \longrightarrow k, a_i(\alpha)=1$ for all $\alpha.$
We may regard the above equation as a system of linear equations in coefficients of $\tilde{A}_{il}(\alpha), 1 \leq l\leq r; \tilde{B}_{ij}(\alpha), 0\leq j \leq m,$ and $ a_i(\alpha)$. Thus, we can apply Lemma \ref{l:22} to this system.
 We  choose $A_{il}=\frac{\tilde{A}_{il}}{a_i},$ $B_{ij}=\frac{\tilde{B}_{ij}}{a_i} ( 0\leq i \leq M, , 1\leq l \leq r,  0\leq j \leq m)$.
\end{proof}

Let $\Lambda$ be an infinite index set. Let $x$ be a map $x: \Lambda\longrightarrow\mathbb{P}^M(k)$. A map $(C,a)\in\mathcal{R}^0_{\Lambda}$ is called \emph{small} with respect to $x$ iff 
$$h(a(\alpha))=o(h(x(\alpha))).$$
 Denote by $\mathcal{K}_x$ the set of all "small" maps. Then, $\mathcal{K}_x$ is a subring of $\mathcal{R}^0_{\Lambda}.$ Furthermore, if $(C,a)\in\mathcal{K}_x$ and $a(\alpha)\not=0$ for all but finitely $\alpha\in C$ then we have $\left(C\backslash\{a(\alpha)=0\}, \dfrac{1}{a}\right)\in\mathcal{K}_x$.

Denote by $\mathcal{C}_{x}$ the set of all positive functions $h$ defined over $\Lambda$ outside a finite subset of $\Lambda$  such that 
$$\log^+(h(\alpha))=o(h(x(\alpha))).$$

Then, $\mathcal{C}_{x}$ is a ring. Moreover, if $(C,a)\in\mathcal{K}_x$ then for every $v\in M_k$, the function $\|a\|_v: C\longrightarrow \mathbb{R}^+$ given by $\alpha\longmapsto \|a(\alpha)\|_v$ belongs to $\mathcal{C}_{x}.$ Furthermore, if $(C,a)\in\mathcal{K}_x, a(\alpha)\not=0$ for all but finitely many $\alpha\in C$ then the function $h:\{\alpha| a(\alpha)\not=0\}\longmapsto\dfrac{1}{\|a(\alpha)\|_v}$ also lies in $ \mathcal{C}_{x}$. 
We have the following lemma:
\begin{lemma}\label{l:26} Let the assumption be as in Lemma \ref{l:24}. We further assume that $Q_0,\ldots, Q_m$ are of the same degree $d$. Let  $x: \Lambda \longrightarrow V(k)$ be a map. 
Then, for  every $v\in M_k$, there exist  functions $l_{1,v}, l_{2,v}$ such that
$$l_{2,v}(\alpha)\|x(\alpha)\|_v^d\leq \max_{0\leq j \leq m}\|Q_j(x(\alpha))\|_v\leq l_{1,v}(\alpha)\|x(\alpha)\|_v^d,$$
for all $\alpha\in A$ outside a finite subset of A. Moreover, if the coefficients of $Q_j, j=0,\ldots, m$ belong to $\mathcal{K}_x$ then $l_{1,v}, l_{2,v}\in\mathcal{C}_x$.
\end{lemma}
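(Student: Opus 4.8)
The plan is to establish the two inequalities separately: the upper one by a bare triangle-inequality estimate, and the lower one by specializing at the points $x(\alpha)$ the Nullstellensatz-type polynomial identities furnished by Lemma~\ref{l:24}. For the upper bound, write $Q_j(\alpha)=\sum_{I\in\mathcal{T}_d}a_{j,I}(\alpha)X^I$, the coefficients $a_{j,I}$ lying in $\mathcal{R}_{A,\{Q_j\}_{j=0}^m}$ since we assume the hypotheses of Lemma~\ref{l:24}. As $\|x(\alpha)^I\|_v\le\|x(\alpha)\|_v^{d}$ for every $I\in\mathcal{T}_d$, the triangle inequality yields, for each $j$,
$$\|Q_j(x(\alpha))\|_v\le\epsilon_v^{n_v}(|\mathcal{T}_d|)\Big(\max_{I}\|a_{j,I}(\alpha)\|_v\Big)\|x(\alpha)\|_v^{d},$$
so one may take $l_{1,v}(\alpha):=\epsilon_v^{n_v}(|\mathcal{T}_d|)\,\max_{0\le j\le m,\ I}\|a_{j,I}(\alpha)\|_v$, which is positive because each $Q_j(\alpha)$, being a point of $\mathbb{P}(H^0(\mathbb{P}^M,\mathcal{O}(d)))$, is a nonzero polynomial.

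For the lower bound, Lemma~\ref{l:24} provides integers $r_0,\ldots,r_M$ and homogeneous polynomials $A_{il},B_{ij}$ over $\mathcal{R}_{A,\{Q_j\}_{j=0}^m}$, with $\deg B_{ij}=r_i-d$, satisfying the identities $X_i^{r_i}=\sum_{l=1}^{r}A_{il}P_l+\sum_{j=0}^{m}B_{ij}Q_j$. Specializing at $\alpha$ — which is legitimate for all but finitely many $\alpha\in A$, namely those at which every coefficient in sight is defined — and using $P_l(x(\alpha))=0$ (valid since $x(\alpha)\in V(k)$ and $P_1,\ldots,P_r$ generate $I_V$), I get
$$x_i(\alpha)^{r_i}=\sum_{j=0}^{m}B_{ij}(x(\alpha))\,Q_j(x(\alpha)),\qquad 0\le i\le M.$$
Estimating $B_{ij}(x(\alpha))$ as in the upper bound (its coefficients $b_{ij,J}$ lie in $\mathcal{R}_{A,\{Q_j\}_{j=0}^m}$ and $\|x(\alpha)^J\|_v\le\|x(\alpha)\|_v^{r_i-d}$), then choosing for each $\alpha$ an index $i_0$ with $\|x_{i_0}(\alpha)\|_v=\|x(\alpha)\|_v\ne0$ and cancelling the factor $\|x(\alpha)\|_v^{r_{i_0}-d}$, I obtain
$$\|x(\alpha)\|_v^{d}\le\Big(\max_{0\le i\le M}C_{i,v}(\alpha)\Big)\max_{0\le j\le m}\|Q_j(x(\alpha))\|_v,$$
where $C_{i,v}(\alpha)=\epsilon_v^{n_v}(m+1)\,\epsilon_v^{n_v}(|\mathcal{T}_{r_i-d}|)\,\max\{\max_{j,J}\|b_{ij,J}(\alpha)\|_v,\,1\}\ge1$. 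Thus $l_{2,v}(\alpha):=\big(\max_{i}C_{i,v}(\alpha)\big)^{-1}$ works, and, usefully, $0<l_{2,v}(\alpha)\le1$.

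For the last assertion, suppose all the $a_{j,I}$ lie in $\mathcal{K}_x$; I claim that then the whole field $\mathcal{R}_{A,\{Q_j\}_{j=0}^m}$ does. Indeed it is the fraction field of the integral domain generated over $k$ by the ratios $a_{j,\mu}/a_{j,\nu}$; since $\mathcal{K}_x\supseteq k$ is a ring and $h(a/b)\le h(a)+h(b)$, $h(1/a)=h(a)$, each such ratio, hence each element of that domain, lies in $\mathcal{K}_x$, and a nonzero element of it is nonzero off a finite subset of $A$ by coherence, so its reciprocal is small as well. Consequently $\|a_{j,I}(\cdot)\|_v$ and $\|b_{ij,J}(\cdot)\|_v$ all belong to $\mathcal{C}_x$, whence $l_{1,v}\in\mathcal{C}_x$ and $\max_iC_{i,v}\in\mathcal{C}_x$; and since $0<l_{2,v}\le1$ forces $\log^+l_{2,v}\equiv0$, the membership $l_{2,v}\in\mathcal{C}_x$ comes for free. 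I expect this last point to be the only genuine obstacle: it requires knowing that the auxiliary polynomials $A_{il},B_{ij}$ — produced non-constructively by the effective Nullstellensatz (Lemma~\ref{l:25}) and the linear-algebra normalisation of Lemma~\ref{l:22} inside the proof of Lemma~\ref{l:24} — still have coefficients that are small with respect to $x$, which is exactly the inclusion $\mathcal{R}_{A,\{Q_j\}_{j=0}^m}\subseteq\mathcal{K}_x$ just established.
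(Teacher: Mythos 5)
Your proof is correct and takes essentially the same route as the paper's: triangle inequality for the upper bound, specialization of the Nullstellensatz identities from Lemma~\ref{l:24} for the lower bound, and the inclusion $\mathcal{R}_{A,\{Q_j\}_{j=0}^m}\subseteq\mathcal{K}_x$ for the smallness assertion. The only slight refinement over the paper is your insertion of $\max\{\cdot,1\}$ in $C_{i,v}$, which forces $0<l_{2,v}\le1$ and makes $\log^+ l_{2,v}\equiv0$, so that $l_{2,v}\in\mathcal{C}_x$ holds trivially; the substantive content (needed later when forming $\tilde h_v=\prod(1+1/h_\mu)$) is that $1/l_{2,v}$ is also controlled, and your bound $\max_i C_{i,v}\in\mathcal{C}_x$ delivers exactly that.
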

\begin{proof}
 
It is easy to see that
\begin{align}\label{e:22}\|Q_j(x(\alpha))\|_v\leq \epsilon^{n_v}_v(|\mathcal{T}_d|)\|Q_j(\alpha)\|_v\|x(\alpha)\|_v^d.
\end{align}
Set $l_{1,v}(\alpha)=\epsilon^{n_v}_v(|\mathcal{T}_d|)\sum_{j=0}^m\|Q_j(\alpha)\|_v, \alpha\in A.$
From \eqref{e:22}, we get
$$\|Q_j(x(\alpha))\|_v\leq l_{1,v}(\alpha)\|x(\alpha)\|^d,  0\leq  j\leq m, \alpha\in A.$$
Since the coefficients of $Q_j, 0\leq j\leq m$ belong to $\mathcal{K}_x$ and $\mathcal{C}_x$ is a ring, we have $l_{1,v}\in\mathcal{C}_x.$ 

Now, we will prove the left-hand side inequality.  Applying Lemma \ref{l:24}, we have: for each $0\leq i\leq M,$ there exist a positive integer $r_i$ and  homogeneous polynomials $A_{il}$ of degree $r_i-\deg P_l$, $B_{ij}$ of degree $r_i-d$ in $ \mathcal{R}_{A,\{Q_j\}_{j=0}^m}[X_0,\ldots, X_M] (1\leq l \leq r, 0\leq j \leq m)$ such that
$$X_i^{r_i}=\sum_{l=1}^rA_{il}P_l+\sum_{j=0}^mB_{ij}Q_j.$$
Hence 
$$ x_i^{r_i}(\alpha)=\sum_{j=0}^mB_{ij}(x(\alpha))Q_j (x(\alpha)).$$
Therefore, we have for all $ \alpha\in A,$
$$\|x_i(\alpha)^{r_i}\|_v\leq \epsilon_v^{n_v}(m+1)\epsilon_v^{n_v}(|\mathcal{T}_{r_i-d}|)\max_{0\leq j\leq m}\|Q_j(x(\alpha)\|_v\\
\sum_j\|B_{ij}(\alpha)\|_v\|x(\alpha)\|^{r_i-d}_v .$$
Since $\|x(\alpha)\|_v=\max(\|x_0(\alpha)\|_v,\ldots,\|x_M(\alpha)\|_v)$ then there exists $i\in \{0,\ldots, M\}$ such that $\|x(\alpha)\|_v=\|x_i(\alpha)\|_v$. Hence,
$$\|x(\alpha)\|_v^d\leq  \max_iC_i\max_{0\leq j\leq m}\|Q_j(x(\alpha))\|_v \sum_{j,i}\|B_{ij}(\alpha)\|_v,$$
where $C_i=\epsilon_v^{n_v}((m+1)|\mathcal{T}_{r_i-d}|)$.
Set $$l_{2,v}(\alpha)=\dfrac{1}{ \max_i C_i\left( \sum\limits_{i, j}\|B_{ij}(\alpha)\|_v,\right)}.$$
  Since the coefficients of $B_{ij}$ belong to $\mathcal{R}_{A,\{Q_j\}}$, they vanish for all $\alpha$ or they vanish for only finitely many $\alpha \in A$. Hence, $l_{2,v}$ is defined outside a finite subset of $A$ and
$$l_{2,v}(\alpha)\|x(\alpha)\|_v^d\leq\max_{0\leq j\leq m}\|Q_j(x(\alpha))\|_v$$
for all $\alpha\in A$ outside a finite subset of $A$. Since $ a_{j,I}\in\mathcal{K}_x$, we have $\mathcal{R}_{A,\{Q_j\}}\subset \mathcal{K}_x$. Therefore $l_{2,v}\in\mathcal{C}_{x}$.
\end{proof}
For each positive integer $N$, let $k[X_0,\ldots, X_M]_N$ denote $k-$ vector space of homogeneous polynomials in $k[X_0,\ldots, X_M]$ of degree $N$ (including 0). Put
$$ (I_V)_N= k[X_0,\ldots, X_M]_N\cap I_V.$$  
Then, the Hilbert function of $V$ is defined by
$$ H_V(N)=\dim_k (k[X_0,\ldots, X_M]_N/(I_V)_N). $$
By the usual theory of Hilbert polynomials, we have
\begin{align}\label{e:203}
H_V(N)=\triangle\cdot \dfrac{N^n}{n!}+O(N^{n-1})\;\text{as}\; N\longrightarrow \infty,
\end{align}
where $\triangle$ is the projective degree of $V.$

 Let $\mathcal{R}_{A,\{Q_j\}_{j=1}^q}[X_0,\ldots, X_M] _N$) denote $\mathcal{R}_{A,\{Q_j\}_{j=1}^q}-$vector space of homogeneous polynomials in $\mathcal{R}_{A,\{Q_j\}_{j=1}^q}[X_0,\ldots, X_M]$ of degree $N$ (including 0). Put
$$ (I_{A, V,\{Q_j\}_{j=1}^q})_N=\mathcal{R}_{A,\{Q_j\}_{j=1}^q}[X_0,\ldots, X_M] _N\cap I_{A, V,\{Q_j\}_{j=1}^q}. $$
We will prove that
\begin{lemma}\label{l:27}
$ \dim_{ \mathcal{R}_{A,\{Q_j\}_{j=1}^q}}\left( \mathcal{R}_{A,\{Q_j\}_{j=1}^q}[X_0,\ldots, X_M]_N/(I_{A, V, \{Q_j\}_{j=1}^q})_N\right)= \triangle\cdot \dfrac{N^n}{n!}+O(N^{n-1})$\\
$\text{as}\; N\longrightarrow \infty.$
\end{lemma}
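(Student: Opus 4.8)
The plan is to prove that for \emph{every} positive integer $N$ one has the exact equality
$$\dim_{\mathcal R}\left(\mathcal R[X_0,\ldots,X_M]_N/(I_{A,V,\{Q_j\}_{j=1}^q})_N\right)=H_V(N),$$
where I abbreviate $\mathcal R:=\mathcal R_{A,\{Q_j\}_{j=1}^q}$; the asymptotic formula in the statement then follows immediately from \eqref{e:203}. In other words, the Hilbert function of $V$ is insensitive to the base field extension $k\hookrightarrow\mathcal R$. The only input beyond linear algebra is the specialization principle of Lemma \ref{l:23}, which identifies membership of a homogeneous polynomial in $I_{A,V,\{Q_j\}_{j=1}^q}$ with membership in $I_V$ after specializing $\alpha$.

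Fix $N$ and put $H:=H_V(N)$. Choose homogeneous polynomials $R_1,\ldots,R_H\in k[X_0,\ldots,X_M]_N$ (for instance monomials, since the degree-$N$ monomials span $k[X_0,\ldots,X_M]_N$ over $k$) whose residues form a $k$-basis of $k[X_0,\ldots,X_M]_N/(I_V)_N$. I claim that the residues of $R_1,\ldots,R_H$ form an $\mathcal R$-basis of $\mathcal R[X_0,\ldots,X_M]_N/(I_{A,V,\{Q_j\}_{j=1}^q})_N$; the lemma follows at once. Spanning is easy: any $F\in k[X_0,\ldots,X_M]_N$ satisfies $F-\sum_i c_{F,i}R_i\in(I_V)_N$ for suitable $c_{F,i}\in k$, and since $(I_V)_N\subseteq(I_{A,V,\{Q_j\}_{j=1}^q})_N$ this congruence persists modulo $(I_{A,V,\{Q_j\}_{j=1}^q})_N$ in $\mathcal R[X_0,\ldots,X_M]_N$; as $k[X_0,\ldots,X_M]_N$ generates $\mathcal R[X_0,\ldots,X_M]_N$ over $\mathcal R$, the residues of the $R_i$ span the quotient.

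The crux is $\mathcal R$-linear independence. Suppose $P:=\sum_{i=1}^H a_iR_i$ lies in $(I_{A,V,\{Q_j\}_{j=1}^q})_N$ with $a_i\in\mathcal R$, not all zero. Clearing denominators, one may choose representatives of the $a_i$ defined simultaneously on a subset of $A$ with finite complement. By Lemma \ref{l:23}, $P(\alpha)=\sum_i a_i(\alpha)R_i\in I_V$ for all but finitely many $\alpha\in A$; and since each value $a_i(\alpha)$ lies in $k$ while the $R_i$ are $k$-linearly independent modulo $(I_V)_N$, this forces $a_i(\alpha)=0$ for every $i$ and all but finitely many $\alpha\in A$. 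Finally, because $\mathcal R$ is the quotient field of an entire ring whose elements obey the coherence dichotomy (each element vanishes either on all of $A$ or on only finitely many $\alpha\in A$), any element of $\mathcal R$ vanishing at cofinitely many $\alpha$ must be $0$; hence $a_i=0$ for all $i$, a contradiction. Thus the residues of $R_1,\ldots,R_H$ are linearly independent over $\mathcal R$, the dimension in question equals $H=H_V(N)$, and combining this with \eqref{e:203} completes the proof.

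I expect the only delicate points to be bookkeeping: arranging that $\alpha\mapsto a_i(\alpha)$ is well defined on a cofinite subset of $A$ (clear denominators against elements of the entire subring, which are themselves nonzero off a finite set), and checking that ``vanishing cofinitely $\Rightarrow$ vanishing identically'' holds for the fraction field $\mathcal R$ and not merely for the entire subring — both are taken care of by the coherence remarks recorded in the excerpt just before the definition of $\mathcal R_{A,\{Q_j\}_{j=1}^q}$. No genuine estimate is involved; the whole point is that Lemma \ref{l:23} transports $k$-linear (in)dependence statements back and forth across the extension $k\hookrightarrow\mathcal R$, so that the Hilbert functions of $V$ over $k$ and over $\mathcal R$ agree term by term.
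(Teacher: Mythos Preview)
Your proof is correct and follows essentially the same approach as the paper: both arguments establish the exact equality $\dim_{\mathcal R}(\mathcal R[X_0,\ldots,X_M]_N/(I_{A,V,\{Q_j\}})_N)=H_V(N)$ by showing that a $k$-basis of monomials (or more general polynomials, as you allow) for $k[X_0,\ldots,X_M]_N/(I_V)_N$ remains an $\mathcal R$-basis after extension, with the key step being that an $\mathcal R$-linear relation specializes via Lemma~\ref{l:23} to $k$-linear relations at each $\alpha$, forcing all coefficients to vanish by coherence. Your write-up is in fact a bit more complete than the paper's: you spell out the spanning argument and the passage from ``vanishes cofinitely'' to ``equals zero in $\mathcal R$'' explicitly, whereas the paper records only the linear-independence equivalence and leaves the rest implicit.
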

\begin{proof}
Let $\phi_1,\ldots, \phi_p$ be  monomials. We will prove that $\phi_1,\ldots, \phi_p$ are linearly independent in $k-$vector space $k[X_0,\ldots, X_M]_N/(I_V)_N$ if and only if $\phi_1,\ldots, \phi_p$ are linearly independent in $\mathcal{R}_{A,\{Q_j\}_{j=1}^q}$-vector space $\mathcal{R}_{A,\{Q_j\}_{j=1}^q}[X_0,\ldots, X_M]_N/(I_{A, V, \{Q_j\}_{j=1}^q})_N$.

The part " if" is obvious. We prove the part " only if". Assume that $\phi_1,\ldots, \phi_p$ are linearly dependent in $\mathcal{R}_{A,\{Q_j\}_{j=1}^q}$-vector space $\mathcal{R}_{A,\{Q_j\}_{j=1}^q}[X_0,\ldots, X_M]_N/(I_{A, V, \{Q_j\}_{j=1}^q})_N$. Then,
  there exist $a_i\in \mathcal{R}_{A,\{Q_j\}_{j=1}^q} $ not all $0$ such that
$$ \sum_{i=1}^{p} a_i\phi_i\in (I_{A, V, \{Q_j\}_{j=1}^q})_N. $$
Therefore $\sum_{i=1}^{p} a_i(\alpha)\phi_i \in (I_V)_N.$ Since $\phi_1,\ldots, \phi_p$ are linearly independent in $k-$vector space $k[X_0,\ldots, X_M]_N/(I_V)_N$, we have $a_i(\alpha)=0$ for all  $\alpha \in A$ and $i=1,\ldots, p.$ We got a contradiction. Hence, the claim is true. Together with \eqref{e:203}, it completes the proof.
\end{proof}
\begin{lemma}\label{l:28}
  Let  $\phi_1,\ldots, \phi_{H_V(N)}$ is a monomial basis of $\mathcal{R}_{A,\{Q_j\}_{j=1}^q}[X_0,\ldots, X_M]_N/(I_{A, V, \{Q_j\}_{j=1}^q})_N$. Then, for each homogeneous polynomial $Q\in \mathcal{R}_{A,\{Q_j\}_{j=1}^q}[X_0,\ldots, X_M]_N $ with one coefficient equal to 1, there exists a linear form $L=\sum_ia_iY_i\in \mathcal{R}_{A,\{Q_j\}_{j=1}^q}[Y_1,\ldots, Y_{H_V(N)}] $ such that
$$ Q\equiv L (\phi_1,\ldots, \phi_{H_V(N)})\; (\text{mod}\; I_{A, V,\{Q_j\}_{j=1}^q}).$$
Furthermore, for all  $\alpha\in A$,
$$h(a_i(\alpha))\leq 2C\left(h(Q(\alpha))+\sum_{j=1}^rh(P_j)+\log (C)\right), 1\leq i\leq H_V(N),$$
$$ h(L(\alpha))\leq 2C\left(h(Q(\alpha))+\sum_{i=1}^rh(P_i)+\log (C)\right), $$
where $C=(3rN^M)^pp^2, p=H_V(N)+r|\mathcal{T}_N|+1.$
\end{lemma}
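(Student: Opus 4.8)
The statement contains two things: the mere existence of $L$, which is immediate from the hypothesis that $\phi_1,\ldots,\phi_{H_V(N)}$ is a basis, and the height bounds, which carry all the content. For existence, the residue class of $Q$ in $\mathcal{R}_{A,\{Q_j\}_{j=1}^q}[X_0,\ldots,X_M]_N/(I_{A,V,\{Q_j\}_{j=1}^q})_N$ is a unique $\mathcal{R}_{A,\{Q_j\}_{j=1}^q}$-linear combination $\sum_i a_i\overline{\phi_i}$, so $L=\sum_i a_iY_i$ does the job. Since $I_V=(P_1,\ldots,P_r)$ and the $P_l$ are homogeneous, $(I_{A,V,\{Q_j\}_{j=1}^q})_N$ consists exactly of the sums $\sum_{l=1}^rB_lP_l$ with $B_l$ homogeneous of degree $N-\deg P_l$ over $\mathcal{R}_{A,\{Q_j\}_{j=1}^q}$; hence there are such $B_l$ with $Q-\sum_i a_i\phi_i=\sum_l B_lP_l$. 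Passing to an infinite subset of $A$ (Remark \ref{re:1.3}) we may assume in addition that all the $a_i$ and all the coefficients of the $B_l$ take values in $k$ on $A$. The plan is to recover $L$ together with its quantitative control from this identity by feeding it into Lemma \ref{l:22}.

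To do so I would add one auxiliary unknown $c$ and view
$$cQ=\sum_{i=1}^{H_V(N)}a_i\phi_i+\sum_{l=1}^r B_lP_l$$
as a homogeneous system of linear equations over $\mathcal{R}_{A,\{Q_j\}_{j=1}^q}$ — one equation per monomial $X^J$ of degree $N$, obtained by comparing coefficients — in the unknowns $c$, $a_1,\ldots,a_{H_V(N)}$, and the coefficients of $B_1,\ldots,B_r$. The number of unknowns is $H_V(N)+\sum_l|\mathcal{T}_{N-\deg P_l}|+1\le H_V(N)+r|\mathcal{T}_N|+1=p$, and the key point is that every entry of the coefficient matrix of this system is $0$, or $\pm 1$ (a coefficient of some monomial $\phi_i$), or a coefficient of $Q$, or a coefficient of some $P_l$; hence for all $\alpha\in A$ and $v\in M_k$, $\max_{i,j}(\|a_{ij}(\alpha)\|_v,1)\le\max(1,\|Q(\alpha)\|_v,\|P_1\|_v,\ldots,\|P_r\|_v)$. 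Because $Q$ has one coefficient equal to $1$, the tuple $(c=1,a_i,\text{coeffs of }B_l)$ from the previous paragraph is a $k$-valued solution of this homogeneous system with nonzero $c$-coordinate; Lemma \ref{l:22} then furnishes a solution in $\mathcal{R}_{A,\{Q_j\}_{j=1}^q}$, still written $(c,a_i,\ldots)$ with $c\neq 0$, satisfying, for every $\alpha\in A$ and $v\in M_k$,
$$\|a_i(\alpha)\|_v,\ \|c(\alpha)\|_v\ \le\ \epsilon_v^{n_v}(p^2)\,\max\big(1,\|Q(\alpha)\|_v,\|P_1\|_v,\ldots,\|P_r\|_v\big)^p .$$
Replacing $L$ by $\sum_i(a_i/c)Y_i$ is legitimate since $\mathcal{R}_{A,\{Q_j\}_{j=1}^q}$ is a field and $B_l/c$ is again homogeneous of degree $N-\deg P_l$, so the congruence $Q\equiv L(\phi_1,\ldots,\phi_{H_V(N)})\pmod{I_{A,V,\{Q_j\}_{j=1}^q}}$ persists; and since projective heights are scaling invariant, $h(L(\alpha))=h\big([a_1(\alpha):\cdots:a_{H_V(N)}(\alpha)]\big)$ (we may assume $Q\notin I_{A,V,\{Q_j\}_{j=1}^q}$, otherwise $L=0$ and there is nothing to prove).

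It remains to turn this local estimate into the global one. After rescaling each generator $P_l$ so that one of its coefficients equals $1$ — which alters neither $I_V$ nor $h(P_l)$ — we have $\|P_l\|_v\ge 1$ and $\|Q(\alpha)\|_v\ge 1$ for all $v$, whence $\sum_v\log\|P_l\|_v=h(P_l)$, $\sum_v\log\|Q(\alpha)\|_v=h(Q(\alpha))$, and $\max(1,\|Q(\alpha)\|_v,\|P_1\|_v,\ldots,\|P_r\|_v)\le\|Q(\alpha)\|_v\prod_l\|P_l\|_v$. Summing $\log$ of the displayed local bound over $v\in M_k$, and using $\sum_{v\in M_k^\infty}n_v=1$ to collapse the $\epsilon_v$-factor into $2\log p$, gives $h(a_i(\alpha))\le 2\log p+p\big(h(Q(\alpha))+\sum_{l=1}^r h(P_l)\big)$ for $1\le i\le H_V(N)$, and the same bound for $h(L(\alpha))$; since $p\le C$ and $2\log p\le\log C$ with $C=(3rN^M)^p p^2$, both are $\le 2C\big(h(Q(\alpha))+\sum_{l=1}^r h(P_l)+\log C\big)$, which is the assertion. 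The only genuinely delicate part is bookkeeping rather than ideas: laying out the linear system so that its matrix entries really are drawn from $\{0,\pm 1\}\cup\{\text{coeffs of }Q\}\cup\{\text{coeffs of }P_l\}$, and checking that the deliberately wasteful constant $C=(3rN^M)^p p^2$ dominates every combinatorial factor — the counts of monomials (bounded by powers of $N$), the size $l!$ of an $l\times l$ minor against the $p^2$ recorded in Lemma \ref{l:22}, and the archimedean $\epsilon_v$'s. Past that, the argument is nothing more than a single application of Lemma \ref{l:22} to the homogeneous identity above.
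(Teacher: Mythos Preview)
Your argument is essentially the paper's own: write $cQ=\sum_i a_i\phi_i+\sum_l B_lP_l$, read it as a homogeneous linear system in $c$, the $a_i$, and the coefficients of the $B_l$, apply Lemma~\ref{l:22}, and then divide through by $c$. Your observation that every matrix entry is literally $0$, $\pm1$, a coefficient of $Q$, or a coefficient of some $P_l$ is in fact sharper than the paper's ``$\mathbb{Z}$-combination of at most $rN^M+2$ coefficients''; both feed into the same crude constant $C$.

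There is one small slip. After you replace $L$ by $\sum_i(a_i/c)Y_i$, the quantities called $a_i$ in the lemma's statement are your $a_i/c$, not your raw $a_i$. Your projective-height argument for $h(L(\alpha))$ is fine (scaling by $c$ is harmless), but the displayed bound you sum over $v$ controls only the raw $h(a_i(\alpha))$ and $h(c(\alpha))$. To get the individual coefficient bound you still need the elementary step $h(a_i(\alpha)/c(\alpha))\le h(a_i(\alpha))+h(c(\alpha))$, which doubles the estimate and is exactly where the factor $2$ in $2C(\cdots)$ comes from; the paper makes this explicit. Once you insert that line, your proof matches the paper's.
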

\begin{proof}
Without loss of generality, we can assume that $P_1,\ldots, P_r$ have leading coefficients 1.
Since $\phi_1,\ldots, \phi_{H_V(N)}$ is a monomial basis of $\mathcal{R}_{A,\{Q_j\}_{j=1}^q}[X_0,\ldots, X_M]_N/(I_{A, V, \{Q_j\}_{j=1}^q})_N$, there exist $\tilde{a}_i\in \mathcal{R}_{A,\{Q_j\}_{j=1}^q}, i=1,\ldots, H_V(N)$ such that
$$ Q\equiv \sum_{i=1}^{H_V(N)} \tilde{a}_i\phi_i  \; (\text{mod}\; I_{A, V,\{Q_j\}_{j=1}^q}).$$
Therefore, there exist homogeneous polynomials $B_i\in \mathcal{R}_{A,\{Q_j\}_{j=1}^q}[X_0,\ldots, X_M]$ of degree $N-\deg P_i$, ($ i=1,\ldots, r$) such that
$$ Q=\sum_{i=1}^r B_i P_i+ \sum_{i=1}^{H_V(N)} \tilde{a}_i\phi_i.$$
Rewrite the above equation in the formula
$$ aQ=\sum_{i=1}^r B_i P_i+ \sum_{i=1}^{H_V(N)} \tilde{a}_i\phi_i, $$
where $a\in  \mathcal{R}_{A,\{Q_j\}_{j=1}^q} $, $a(\alpha)=1$ for all $\alpha.$ We may consider the above equation as a system of linear equations in coefficients of $B_i, 1\leq i \leq r; \tilde{a}_i, 1\leq i\leq H_V(N)$ and $a$. The number of variables does not exceed $ H_V(N)+r|\mathcal{T}_N|+1:=p$. The coefficients of this system are $\mathbb{Z}-$combinational of at most $rN^M+2$ coefficients of $Q, P_i.$ Applying Lemma \ref{l:22}, there exist $B_i\in\mathcal{R}_{A,\{Q_j\}_{j=1}^q}[X_0,\ldots, X_M], 1\leq i \leq r; \tilde{a}_i, 1\leq i\leq H_V(N), 0\not=a\in \mathcal{R}_{A,\{Q_j\}_{j=1}^q}$ satisfying the above equation and
$$ \|a(\alpha)\|,\|\tilde{a}_i(\alpha)\| _v\leq \epsilon_v^{n_v}(p^2)\left[\epsilon_v^{n_v}(rN^M+2)\max_i(\|Q(\alpha)\|_v, \|P_i\|_v,1)\right]^p.$$
Denote by $C=(3rN^M)^pp^2$. Since $Q, P_j$ have at least one coefficient equal to 1, we have $\|Q(\alpha)\|_v, \|P_j\|_v\geq 1$ for all $v\in M_k$. Therefore
\begin{align}\label{e:2001}
 \max(\|a(\alpha)\| _v,\|\tilde{a}_i(\alpha)\| _v, 1)\leq \epsilon_v^{n_v}(C) \|Q(\alpha)\|_v^C\prod_{j=1}^r \|P_j\|_v^C.
\end{align}
Hence
\begin{align}\label{e:2002}
h(a(\alpha)), h(\tilde{a}_i(\alpha))\leq C(h(Q(\alpha))+\sum_{j=1}^{r}h(P_j)+\log C) .
\end{align}
Set $\tilde{L}=\sum_{i=1}^{H_V(N)} \tilde{a}_iY_i \in \mathcal{R}_{A,\{Q_j\}_{j=1}^q}[Y_1,\ldots,Y_{H_V(N)}].$ Then, from \eqref{e:2001}, we have
$$ \|\tilde{L}(\alpha)\|_v \leq \epsilon_v^{n_v}(C) \|Q(\alpha)\|_v^C\prod_{j=1}^r \|P_j\|_v^C. $$
Therefore, we have 
\begin{align}\label{e:2005}
h(\tilde{L}(\alpha))\leq C(h(Q(\alpha))+\sum_{j=1}^{r}h(P_j)+\log C).
\end{align}
Set $a_i=\dfrac{\tilde{a}_i}{a}$ and  $L=\sum_ia_iY_i$. Then $$Q\equiv L(\phi_1,\ldots, \phi_{H_V(N)})\;\text{mod}\; I_{A,V,\{Q_j\}_{j=1}^q}.$$ 
Furthermore, from \eqref{e:2002} and \eqref{e:2005}, we have
 \begin{align*}
h(a_i(\alpha))\leq h(\tilde{a}_i(\alpha))+h(a(\alpha))\leq 2C\left( h(Q(\alpha))+\sum_{j=1}^{r}h(P_j)+\log C\right)
\end{align*}
and
$$ h(L(\alpha)) =h\left(\dfrac{1}{a(\alpha)}\tilde{L}(\alpha)\right)\leq h(a(\alpha))+h(\tilde{L}(\alpha))\leq 2C\left( h(Q(\alpha))+\sum_{j=1}^{r}h(P_j)+\log C\right),$$
which completes the proof.

\end{proof}
\begin{lemma}\label{l:29}
 Let $\phi_1,\ldots, \phi_{H_V(N)}$ be a monomial basis of $\mathcal{R}_{A,\{Q_j\}_{j=1}^q}[X_0,\ldots, X_M]_N/(I_{A, V, \{Q_j\}_{j=1}^q})_N$. We define
$$ F=[\phi_1:\ldots: \phi_{H_V(N)}] .$$
Let $x: \Lambda \longrightarrow V(k)$ be a collection of points. Then
$$ h(F(x(\alpha)))=Nh(x(\alpha))+O(1), $$
where $O(1)$ is a constant depending only on $V,  N.$
\end{lemma}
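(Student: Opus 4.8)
The plan is to establish the two inequalities $h(F(x(\alpha)))\le Nh(x(\alpha))+O(1)$ and $h(F(x(\alpha)))\ge Nh(x(\alpha))+O(1)$ separately. The first will be immediate, because the $\phi_i$ are monomials of degree $N$; the second will follow by expressing each power $X_i^N$ in terms of the basis $\phi_1,\dots,\phi_{H_V(N)}$ modulo $I_V$ and evaluating along $V$.

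First I would invoke Lemma \ref{l:27}: since the monomials $\phi_1,\dots,\phi_{H_V(N)}$ form a basis of the $\mathcal{R}_{A,\{Q_j\}_{j=1}^q}$-vector space $\mathcal{R}_{A,\{Q_j\}_{j=1}^q}[X_0,\dots,X_M]_N/(I_{A,V,\{Q_j\}_{j=1}^q})_N$, they are also a $k$-basis of $k[X_0,\dots,X_M]_N/(I_V)_N$. In particular, for every $i\in\{0,\dots,M\}$ there are constants $c_{ij}\in k$ with
$$X_i^N=\sum_{j=1}^{H_V(N)}c_{ij}\phi_j+P_i',\qquad P_i'\in (I_V)_N.$$
Since $x(\alpha)\in V(k)$ we have $P_i'(x(\alpha))=0$, hence $x_i(\alpha)^N=\sum_j c_{ij}\phi_j(x(\alpha))$ for all $\alpha$ and all $i$; picking $i$ with $x_i(\alpha)\neq 0$ shows that some $\phi_j(x(\alpha))\neq 0$, so $F(x(\alpha))\in\mathbb{P}^{H_V(N)-1}(k)$ is well defined and $h(F(x(\alpha)))=\sum_{v\in M_k}\log\max_{1\le j\le H_V(N)}\|\phi_j(x(\alpha))\|_v$.

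For the upper bound, each $\phi_j=x^I$ with $I\in\mathcal{T}_N$, so $\|\phi_j(x(\alpha))\|_v\le\|x(\alpha)\|_v^N$ for all $v\in M_k$; taking the maximum over $j$ and summing $\log$ over $v\in M_k$ yields $h(F(x(\alpha)))\le Nh(x(\alpha))$. For the lower bound, applying the triangle inequality to $x_i(\alpha)^N=\sum_j c_{ij}\phi_j(x(\alpha))$ gives, for every $v\in M_k$ and every $i$,
$$\|x_i(\alpha)\|_v^N\le\epsilon_v^{n_v}(H_V(N))\Big(\max_{i,j}(\|c_{ij}\|_v,1)\Big)\|F(x(\alpha))\|_v.$$
Choosing, for each $v$ and each $\alpha$, the index $i$ with $\|x_i(\alpha)\|_v=\|x(\alpha)\|_v$ and then summing $\log$ over $v\in M_k$, we obtain $Nh(x(\alpha))\le h(F(x(\alpha)))+O(1)$, since $\sum_v\log\big(\epsilon_v^{n_v}(H_V(N))\max_{i,j}(\|c_{ij}\|_v,1)\big)$ is a finite sum whose value depends only on $V$ and $N$.

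I do not expect a serious obstacle here: the statement is essentially the functoriality of heights for the morphism $V\to\mathbb{P}^{H_V(N)-1}$ attached to $\mathcal{O}_V(N)=\mathcal{O}_{\mathbb{P}^M}(N)|_V$. The only points deserving attention are the reduction from the $\mathcal{R}_{A,\{Q_j\}_{j=1}^q}$-basis to the $k$-basis — which is exactly Lemma \ref{l:27} — and the observation that the implied constant is independent of $\alpha$ and of $A$, which holds because the coefficients $c_{ij}$ live in $k[X_0,\dots,X_M]_N/(I_V)_N$ and do not involve the moving data at all.
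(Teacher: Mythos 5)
Your proof is correct and complete, and it takes a genuinely different route from the paper in the lower-bound step. The paper invokes Lemma \ref{l:28} applied to each $X_i^N$, producing linear forms $L_i = \sum_j a_{ij} Y_j$ with coefficients $a_{ij}(\alpha) \in \mathcal{R}_{A,\{Q_j\}_{j=1}^q}$, and then controls $h(a_{ij}(\alpha))$ via the explicit height bound of that lemma (which becomes a constant depending only on $V$ and $N$ since $h(X_i^N(\alpha))=0$). You instead observe that since $X_i^N$ has coefficients in $k$, the expression of $X_i^N$ in the basis $\phi_1,\dots,\phi_{H_V(N)}$ can be taken over $k$ itself: the equivalence of $k$-linear and $\mathcal{R}_{A,\{Q_j\}_{j=1}^q}$-linear independence for monomials established in the proof of Lemma \ref{l:27}, combined with the equality of dimensions, shows that the $\phi_j$ form a $k$-basis of $k[X_0,\dots,X_M]_N/(I_V)_N$, so the coefficients $c_{ij}$ lie in $k$ and are literally constant (independent of $\alpha$ and of $A$). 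This bypasses Lemma \ref{l:28} entirely and makes the uniformity of the $O(1)$ term transparent, at the modest cost of having to state explicitly the transfer of basis-hood from the $\mathcal{R}_{A,\{Q_j\}_{j=1}^q}$-module to the $k$-module. Both arguments yield the same bound; yours is more elementary and arguably cleaner, while the paper's has the minor virtue of re-using machinery already set up.
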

\begin{proof}
It is obvious that 
$$ \|F(x(\alpha))\|_v=\max_i\|\phi_i(x(\alpha))\| _v\leq \|x(\alpha)\|_v^N.$$
Therefore 
\begin{align}\label{e:2007}
h(F(x(\alpha)))\leq N h(x(\alpha)).
\end{align}
For each $0\leq i\leq M,$ we apply Lemma \ref{l:28} to $X_i^N$,  there exists a linear form  $L_i=\sum_ja_{ij}Y_j\in \mathcal{R}_{A,\{Q_j\}_{j=1}^q}[Y_1,\ldots, Y_{H_V(N)}]$ such that
$$ X_i^N\equiv L_i(\phi_1,\ldots, \phi_{H_V(N)}) \;\;\;(\text{mod}\; I_{A, V,\{Q_j\}_{j=1}^q}). $$
Furthermore, for all $\alpha \in A,$
\begin{align}\label{e:2003}
h(a_{ij}(\alpha))\leq 2C(\sum_{i=1}^{r}h(P_i)+\log C), 1\leq j\leq H_V(N),0\leq i\leq M.
\end{align}
Hence
$$ x_i^N(\alpha)= \sum_{j=1}^{H_V(N)}a_{ij}(\alpha)\phi_j (x(\alpha))$$
Therefore, for every $v\in M_k,$
\begin{align*} \|x_i(\alpha)\|^N_v&\leq \epsilon_v^{n_v}(H_V(N))\max_j\|a_{ij}(\alpha)\|_v\|F(x(\alpha))\|_v \\
&\leq \epsilon_v^{n_v}(H_V(N))\max_{i,j}\|a_{ij}(\alpha)\|_v\|F(x(\alpha))\|_v
\end{align*}
for all $0\leq i\leq M.$ Hence,
\begin{align*}
\|x(\alpha)\|^N_v&\leq \epsilon_v^{n_v}(H_V(N))\max_{i,j}\|a_{ij}(\alpha)\|_v\|F(x(\alpha))\|_v\\
&\leq \epsilon_v^{n_v}(H_V(N))\prod_{i,j}\max(\|a_{ij}(\alpha)\|_v,1)\|F(x(\alpha))\|_v.
\end{align*}
Together with \eqref{e:2003} , we have
\begin{align}\label{e:2006}
  Nh(x(\alpha))&\leq h(F(x(\alpha)))+\sum_{i=0}^M\sum_{j=1}^{H_V(N)}h(a_{ij})+\log H_V(N),\notag\\
&\leq h(F(x(\alpha)))+2(M+1)H_V(N)C(\sum_{i=1}^{r}h(P_i)+\log C)+\log H_V(N).
   \end{align}
From \eqref{e:2007} and \eqref{e:2006}, we have the desired result.


\end{proof}
\section{Proof of The Main Theorem}
We will show that the Main Theorem is an implication of the following theorem.
\begin{theorem}\label{t:31} Let k be a number field, $M_k^\infty\subset S$ be a finite set of places of k, let $q, m, n$ be  positive integers with $q>m\geq n$ and $\epsilon>0.$ Let $\Lambda$ be an infinite index set, let $Q_1,\ldots, Q_q$ be moving hypersurfaces in $\mathbb{P}^M(k)$ respectively of the same degree d. Let $V$ be an irreducible projective subvariety of $\mathbb{P}^M$ defined over $k$ of dimension $n$ and degree $\triangle.$ Let $x: \Lambda\longrightarrow V(k)$ be a collection of points such that:

(1) The family of polynomials $Q_1,\ldots, Q_q$ locates in m-subgeneral position with respect to $V$;
 
 (2)  x is algebraically non-degenerate over $V$ and  $\mathcal{R}_{\{Q_j\}_{j=1}^q}$;
 
 (3) $ h(Q_j(\alpha))=o(h(x(\alpha)))$ for all  $j=1,\ldots, q.$\\
 Let $A\subset \Lambda$ be an infinite index subset which is coherent with respect to $\{Q_j\}_{j=1}^q$. Suppose moreover that:
 
 $ \bullet$ all the polynomials $Q_j$ have coefficients in $\mathcal{R}_{A, \{Q_j\}_{j=1}^q}.$ Furthermore, for each $1\leq j\leq q,$  $ Q_j$ has one coefficient equal to 1.
 
 $\bullet$ the polynomials $Q_j's$ never vanish at x over $A$, i.e. for any $\alpha\in A$ and any $j=1,\ldots, q,$ we have
 $$Q_j(\alpha)(x(\alpha))\not=0.$$ 
 
 Then there exists an infinite index subset $B\subset A \subset\Lambda$ such that
$$\sum_{v\in S}\sum_{j=1}^q\dfrac{1}{d}\lambda_{Q_j(\alpha),v}(x(\alpha))\leq (m(n+1)+\epsilon)h(x(\alpha)),$$
for all $\alpha\in B.$
\end{theorem}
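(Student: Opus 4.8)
The plan is to reduce the theorem, in three steps, to the moving hyperplane subspace theorem (Theorem~A) applied in $\mathbb{P}^{H_V(N)-1}(k)$ for a large auxiliary integer $N$ divisible by $d$, to be fixed at the very end in terms of $\epsilon,d,n$ and $\triangle$. We may assume $h(x(\alpha))\to\infty$ as $\alpha$ runs through $A$, since otherwise Northcott's theorem makes $x$ constant on an infinite subset and the estimate is trivial. Throughout, ``$o(h(x(\alpha)))$'' denotes any function on $A$ of that order; because $S$ is finite and contains $M_k^\infty$, each $Q_j$ has a coefficient equal to $1$ (so $\|Q_j(\alpha)\|_v\ge1$), and $h(Q_j(\alpha))=o(h(x(\alpha)))$ by hypothesis~(3), the quantities $\sum_{v\in S}\log\|Q_j(\alpha)\|_v$, $\sum_{v\in S}|\log\|c(\alpha)\|_v|$ for $c\in\mathcal R_{A,\{Q_j\}}\subseteq\mathcal K_x$, and $\sum_{v\in S}O(1)$ are all of this type and will be discarded freely; also, by \eqref{e:22}, every $\lambda_{Q_j(\alpha),v}(x(\alpha))$ is $\ge-O(1)$.

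\textbf{Step 1 (passage to the $m$ nearest targets).} Fix $\alpha\in A$, and for each $v\in S$ order the indices so that $\|Q_{j_0(v)}(x(\alpha))\|_v\le\dots\le\|Q_{j_{q-1}(v)}(x(\alpha))\|_v$ (suppressing $\alpha$). Any $m+1$ of the $Q_j$ are in $m$-subgeneral position with respect to $V$; applying Lemma~\ref{l:26} to each of the finitely many $(m+1)$-element subfamilies and taking the least of the resulting functions $l_{2,v}\in\mathcal C_x$ gives $\|Q_{j_m(v)}(x(\alpha))\|_v\ge l_{2,v}(\alpha)\|x(\alpha)\|_v^d$, whence $\lambda_{Q_{j_i(v)}(\alpha),v}(x(\alpha))\le\log\|Q_{j_i(v)}(\alpha)\|_v-\log l_{2,v}(\alpha)$ for all $i\ge m$. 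Summing over $v\in S$ and $i\ge m$ discards these terms, so
$$\sum_{v\in S}\sum_{j=1}^q\frac1d\lambda_{Q_j(\alpha),v}(x(\alpha))\le\sum_{v\in S}\sum_{i=0}^{m-1}\frac1d\lambda_{Q_{j_i(v)}(\alpha),v}(x(\alpha))+o(h(x(\alpha))).$$
\textbf{Step 2 (passage to the single nearest target).} Since $\|Q_{j_0(v)}(x(\alpha))\|_v\le\|Q_{j_i(v)}(x(\alpha))\|_v$ and $\|Q_{j_0(v)}(\alpha)\|_v\ge1$, we have $\lambda_{Q_{j_i(v)}(\alpha),v}(x(\alpha))\le\lambda_{Q_{j_0(v)}(\alpha),v}(x(\alpha))+\log\|Q_{j_i(v)}(\alpha)\|_v$ for every $i$; summing over $i=0,\dots,m-1$ and over $v\in S$ turns the right-hand side of Step~1 into $m\sum_{v\in S}\frac1d\lambda_{Q_{j_0(v)}(\alpha),v}(x(\alpha))+o(h(x(\alpha)))$. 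Hence it suffices to find an infinite $B\subseteq A$ with
$$\sum_{v\in S}\frac1d\lambda_{Q_{j_0(v)}(\alpha),v}(x(\alpha))\le\Big(n+1+\tfrac{\epsilon}{2m}\Big)h(x(\alpha))\qquad(\alpha\in B),$$
for then Steps~1--2 yield the theorem with room to spare.

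\textbf{Step 3 (the nearest-target estimate via a filtration and Theorem~A).} Modulo $o(h(x(\alpha)))$ the left-hand side of the last display equals $\frac1d\sum_{v\in S}\log\big(\|x(\alpha)\|_v^{d}/\|Q_{j_0(v)}(x(\alpha))\|_v\big)$. Fix $N$, take a monomial basis $\phi_1,\dots,\phi_{H_V(N)}$ of $\mathcal R_{A,\{Q_j\}}[X_0,\dots,X_M]_N/(I_{A,V,\{Q_j\}})_N$ (Lemma~\ref{l:27}), and set $F=[\phi_1:\dots:\phi_{H_V(N)}]$. For each of the finitely many orderings $\tau$ of the targets by distance at $(v,\alpha)$, filter the quotient by the decreasing family of subspaces $W^\tau_s$ spanned by the images of products $Q_{\tau(0)}^{a_0}\cdots Q_{\tau(m)}^{a_m}\cdot M$ ($M$ a monomial, total degree $N$, $a_0+\dots+a_m\ge s$), refine this to a multi-filtration, and choose an adapted monomial-image basis in which a basis element at level $\mathbf a$ is divisible by $Q_{\tau(0)}^{a_0}\cdots Q_{\tau(m)}^{a_m}$, the ordering of the multi-indices being chosen to favour high powers of the nearest target $Q_{\tau(0)}$; because the $m+1$ targets $Q_{\tau(0)},\dots,Q_{\tau(m)}$ have no common zero on $V$, Lemma~\ref{l:24} guarantees that $W^\tau_s$ exhausts the whole quotient for $s$ up to $\asymp N/d$. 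Applying Lemma~\ref{l:28} writes each basis element as $L(\phi_1,\dots,\phi_{H_V(N)})$ modulo $I_{A,V,\{Q_j\}}$ for a moving linear form $L$ with $h(L(\alpha))=o(h(x(\alpha)))$ — here hypothesis~(3) and $\mathcal R_{A,\{Q_j\}}\subseteq\mathcal K_x$ enter through the explicit bound of Lemma~\ref{l:28}. Collecting these $L$'s over all $\tau$ yields a finite family $\mathcal L$ of moving hyperplanes $\Lambda\to(\mathbb P^{H_V(N)-1})^*(k)$ slow with respect to $F\circ x$, and $F\circ x$ is non-degenerate over $\mathcal R$ with respect to $\mathcal L$ because, by hypothesis~(2) and the linear independence of the $\phi_l$ modulo $I_{A,V,\{Q_j\}}$, no nontrivial $\mathcal R$-linear combination of the $\phi_l$ vanishes along $x$. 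A direct local computation — comparing $\|L(F(x(\alpha)))\|_v=\|\psi(x(\alpha))\|_v$ with the factorization of the basis element $\psi$ through powers of the relevant $Q_{\tau(i)}$, and using $h(F(x(\alpha)))=Nh(x(\alpha))+O(1)$ (Lemma~\ref{l:29}) — shows that at each $v\in S$ the contribution of the subfamily $\mathcal L_\tau$ attached to the ordering realized at $(v,\alpha)$ dominates $\Theta(N)\log\big(\|x(\alpha)\|_v^{d}/\|Q_{j_0(v)}(x(\alpha))\|_v\big)$ up to an $o(h(x(\alpha)))$-error, where, by \eqref{e:203} and the weighting, $\Theta(N)=\frac{N\,H_V(N)}{d(n+1)}+O(N^n)$. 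Applying Theorem~A to $F\circ x$ and $\mathcal L$ gives an infinite $B\subseteq A$ with $\sum_{v\in S}\max_K\sum_{L\in K}\lambda_{L(\alpha),v}(F(x(\alpha)))\le(H_V(N)+\epsilon')h(F(x(\alpha)))$ for $\alpha\in B$; substituting the local lower bound, $h(F(x(\alpha)))=Nh(x(\alpha))+O(1)$, and $\Theta(N)\sim NH_V(N)/(d(n+1))$, and taking $N$ large and $\epsilon'$ small, we obtain $\frac1d\sum_{v\in S}\log\big(\|x(\alpha)\|_v^{d}/\|Q_{j_0(v)}(x(\alpha))\|_v\big)\le(n+1+\tfrac{\epsilon}{2m})h(x(\alpha))$ on $B$, completing Step~2 and hence the theorem.

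The main obstacle is the moving-target bookkeeping in Step~3: one must verify that every auxiliary moving hyperplane $L$ extracted from the filtration really is slow relative to $F\circ x$ — this is exactly where the explicit height bound of Lemma~\ref{l:28} combined with hypothesis~(3) is used — and that algebraic non-degeneracy of $x$ (hypothesis~(2)) upgrades to non-degeneracy over $\mathcal R$ of $F\circ x$ with respect to $\mathcal L$, so that Theorem~A applies, all the while handling the finitely many ordering patterns $\tau$ uniformly so that each $o(h(x(\alpha)))$-error can be absorbed into $\tfrac\epsilon2 h(x(\alpha))$ after restricting to a single infinite index set $B$. The delicate quantitative point is the filtration-dimension count $\Theta(N)\sim NH_V(N)/(d(n+1))$: it is here that the $n$-dimensionality of $V$ (via \eqref{e:203}) forces the factor $n+1$, and hence, together with the crude loss of a factor $m$ in Step~2, the final constant $m(n+1)$; the subsequent choice of $N$ is routine.
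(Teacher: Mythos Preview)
Your proof follows the same strategy as the paper's: reduce to the single nearest target (your Steps~1--2 are exactly the paper's inequalities \eqref{e:36}--\eqref{e:37}), filter $V_N$, express an adapted basis through moving linear forms in a fixed monomial basis via Lemma~\ref{l:28}, verify slowness and non-degeneracy, and apply Theorem~A to $F\circ x$ in $\mathbb{P}^{H_V(N)-1}$. The one place the paper is simpler is Step~3: having already reduced to the single target $Q_{j_1(v,\alpha)}$, it filters $V_N$ only by powers of that one polynomial, $W_i=\{g^*:Q_{j_1(v,\alpha)}^i\mid g\}$, so that $\dim W_i=H_V(N-di)$ (primeness of $I_V$) gives the weight $\sum i_j=\dfrac{\triangle N^{n+1}}{d(n+1)!}(1+o(1))$ directly --- your multi-filtration in all $m+1$ nearest $Q_{\tau(i)}$ is a vestige of the Corvaja--Zannier/Evertse--Ferretti general-position argument that is unnecessary here, since after Steps~1--2 only the $a_0$-weight contributes and that is precisely the single-filtration invariant.
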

\begin{proof} 
Fix $v\in S$. Given $\alpha\in A,$ there exists a renumbering $\{j_1(v,\alpha),\ldots, j_q(v,\alpha)\}$ of the indices $\{1,\ldots,q\}$ such that
$$ |Q_{j_1(v,\alpha)}(x(\alpha))|_v\leq \cdots \leq |Q_{j_m(v,\alpha)}(x(\alpha))|_v\leq \min_{j\not\in \{j_1(v,\alpha),\ldots, j_m(v,\alpha)\}} Q_{j}(x(\alpha))$$
for all $\alpha \in A$. 

By Lemma \ref{l:26}, we have
\begin{align}\label{e:36}
\log\prod_{j=1}^q\|Q_j(x(\alpha))\|_v=\log\prod_{i>m}\|Q_{j_i}(x(\alpha))\|_v+\log\prod_{i=1}^m\|Q_{j_i(v)}(x(\alpha))\|_v\notag\\
\geq d(q-m) \log\|x(\alpha)\|_v-\log\tilde{h}_v(\alpha)+m\log\|Q_{j_1(v,\alpha)}(x(\alpha))\|_v,
\end{align}
where $\tilde{h}_v$ as a product of the form $\prod\left(1+\dfrac{1}{h_{\mu}}\right)$ ( $h_{\mu}$ run over all the choices of $l_{2,v}$). Since for each $j=1,\ldots, q$, $Q_j$ has at least one coefficient equal to 1 and $h(Q_j(\alpha))=o(h(x(\alpha)))$,  we have the coefficients of $Q_j, 1\leq  j\leq q,$ 
belong to $\mathcal{K}_x.$ Therefore, $\tilde{h}_v\in\mathcal{C}_x.$

By \eqref{e:36}, for all $\alpha\in A,$
 \begin{align}\label{e:37}
\log\prod_{j=1}^q\dfrac{\|x(\alpha)\|_v^d}{\|Q_j(x(\alpha))\|_v}\leq m \log\dfrac{\|x(\alpha)\|_v^d}{\|Q_{j_1(v,\alpha)}(x(\alpha))\|_v}+\log\tilde{h}_v(\alpha).
\end{align}
For each integer $N$, put $$V_N=\mathcal{R}_{A,\{Q_j\}_{j=1}^q}[X_0,\ldots,X_M]_N/(I_{A, V,\{Q_j\}_{j=1}^q})_N.$$ 
For every positive integer $N$ with $d|N$, we consider the following filtration on the vector space $V_N$ with respect to $Q_{j_1(v,\alpha)}$: The filtration
$$V_N=W_0\supset W_1\supset\cdots\supset W_{N/d}$$
is defined by
$$W_i=\{g^*|g\in \mathcal{R}_{A,\{Q_j\}_{j=1}^q}[X_0,\ldots,X_M]_N\,\,\text{and}\,\, Q_{j_1(v,\alpha)}^i|g\},$$
where $g^*$ is the projection of $g$ to $\mathcal{R}_{A,\{Q_j\}_{j=1}^q}[X_0,\ldots,X_M]_N/(I_{A, V,\{Q_j\}_{j=1}^q})_N.$ Take a basis $\psi_1,\ldots, \psi_{H_V(N)}$ of the vector space $V_N$ compatible with the filtration $W_i$, by this, we mean that, for each $i=0,\ldots, N$, it contains a basis of $W_i$.

Furthermore, we can choose a basis $\psi_1,\ldots,\psi_{H_V(N)}$ such that
\begin{align}\label{e:41}
\psi_j=Q_{j_1(v,\alpha)}^{i_j}g_j,
\end{align}
where $g_j\in \mathcal{R}_{A,\{Q_j\}_{j=1}^q}[X_0,\ldots, X_M] _{N-di_j}$ is a monomial and $g_j$ does not divide $Q_{j_1(v,\alpha)}$. 
Hence,
\begin{align}\label{e:34}
\sum_{j=1}^{H_V(N)}\log \|\psi_j(x(\alpha))\|_v&\leq \left(\sum i_j \right)\log \|Q_{j_1(v,\alpha)}(x(\alpha))\|_v\notag\\
&+\left(NH_V(N)-d\left(\sum i_j\right)\right)\log \|x(\alpha)\|_v.
\end{align}
 Now, we estimate the sum $\sum_{j=1}^{H_V(N)}i_j.$ To do it, we modify a Lemma from \cite{CRY2}, Lemma 2.2. 
\begin{lemma}\label{l:3}(\cite{CRY2}, Lemma 2.2) $$\sum_{j=1}^{H_V(N)}i_j=\dfrac{\triangle N^{n+1}}{(n+1)!}(1+o(1)),$$ where the function $o(1)$ depends only on the variety $V$.
\end{lemma}
\begin{proof}
 It is clear that there are exactly $\dim (W_i/W_{i+1})$ elements $\psi_j$ with $i_j=i$ in the set $\psi_1,\ldots, \psi_{H_V(N)}.$ Hence,
\begin{align}\sum_{j=1}^{H_V(N)}i_j=\sum_{i=1}^{N/d} i\cdot\dim(W_i/W_{i+1}),\label{e:42}
\end{align}
in which $W_{N/d+1}=\{\vec{0}\}.$

Next, we claim that $\dim W_i=\dim V_{N-di}.$ To see it, notice that each element $\psi$ of $W_i$ can be represented as $\psi=Q_{j_1(v,\alpha)}^{i}.g$ with $g\in \mathcal{R}_{A, \{Q_j\}_{j=1}^q}[X_0,\ldots, X_M]_{N-di}.$ Furthermore, two polynomials $g_1, g_2$ such that $Q_{j_1(v,\alpha)}^{i}\cdot g_1=Q_{j_1(v,\alpha)}^{i}\cdot g_2$ in $W_i$ if and only if $Q_{j_1(v,\alpha)}^{i}(g_1-g_2)\in I_{A, V, \{Q_j\}_{j=1}^q}$. Therefore $$Q_{j_1(v,\alpha)}^{i}(\beta)(g_1(\beta)-g_2(\beta))\in I_{V}  $$  for all $\beta \in A$. Since $I_V$ is a prime ideal, we have   $g_1(\beta)-g_2(\beta)\in I_V$ for all $\beta\in A$. Applying Lemma \ref{l:23}, we have $g_1-g_2\in I_{A, V, \{Q_j\}_{j=1}^q}.$ Therefore $\dim W_i=\dim V_{N-di}=H_V(N-di).$ By Lemma \ref{l:27},  for each positive integer $L$,
$$\dim V_L=\triangle\cdot\dfrac{L^n}{n!}+O(L^{n-1}).$$
  Hence,
\begin{align*}
\sum_{i=1}^{N/d}i\cdot\dim (W_i/W_{i+1})&=\sum_{i=1}^{N/d} i(\dim W_i-\dim W_{i+1}) \\
&=\sum_{i=1}^{N/d}i\cdot\dim W_i-\sum_{i=1}^{N/d}((i+1)\dim W_{i+1}-\dim W_{i+1})\\
&=\sum_{i=1}^{N/d}\dim W_i=\dfrac{\triangle}{n!}\sum_{i=1}^{N/d}\left((N-di)^n+O(N^{n-1})\right)\\
&=\dfrac{\triangle N^n}{n!}\sum_{i=1}^{N/d}\left(1-\frac{di}{N}\right)^n+O(N^n)\notag\\
&=\dfrac{\triangle N^{n+1}}{n!d}\left(\int\limits_0^1(1-t)^n\cdot dt+o(1)\right)+O(N^n).
\end{align*}
Therefore
\begin{align}
\sum_{i=1}^{N/d}i\cdot\dim (W_i/W_{i+1})=\dfrac {\triangle N^{n+1}}{(n+1)!}(1+o(1)).\label{e:43}
\end{align}
Combining \eqref{e:42} and \eqref{e:43}, we obtain the desired result.
\end{proof}
Let $\phi_1,\ldots,\phi_{H_V(N)}$ be a fixed monomial basis of $V_N$. Set 
\begin{align*}F: V&\longrightarrow \mathbb{P}^{H_V(N)-1}\\
x&\longmapsto (\phi_1(x):\ldots:\phi_{H_V(N)}(x))
\end{align*}
Applying Lemma \ref{l:28}, there exist linear forms $L_j=\sum_{r=1}^{H_V(N)}w_{jr}Y_r$ with coefficients in $\mathcal{R}_{A,\{Q_j\}_{j=1}^q}, 1\leq j\leq H_V(N)$ such that  
$$\psi_j\equiv L_j(\phi_1,\ldots,\phi_{H_V(N)}),\; (\text{mod}\; I_{A, V, \{Q_j\}_{j=1}^q})\; 1\leq j\leq H_V(N).$$
Furthermore, for all $\alpha\in A,$ we have
\begin{align}\label{e:301}
 h(L_j(\alpha))\leq 2C(h(\psi_j(\alpha))+\sum_{i=1}^rh(P_i)+\log (C)), 1\leq j\leq H_V(N).
 \end{align}
 and
 \begin{align}\label{e:302}
 h(w_{jr}(\alpha))\leq 2C(h(\psi_j(\alpha))+\sum_{i=1}^rh(P_i)+\log (C)), 1\leq j, r\leq H_V(N).
\end{align}
 Then, we have
\begin{align*}\log \|L_j(F(x(\alpha)))\|_v=\log\|\psi_j(x(\alpha))\|_v.
\end{align*}
Combining with \eqref{e:34}, we have
\begin{align*}\log\prod_{j=1}^{H_V(N)}\dfrac{\|F(x(\alpha))\|_v\|L_j(\alpha)\|_v}{\|L_j(F(x(\alpha)))\|_v}\geq -(\sum i_j)\log\|Q_{j_1(v,\alpha)}(x(\alpha))\|_v\\
+(d(\sum_j i_j)-NH_V(N))\log\|x(\alpha)\|_v+H_V(N)\log\|F(x(\alpha))\|_v+\log\prod_{j=1}^{H_V(N)}\|L_j(\alpha)\|_v\\
=\dfrac{\triangle N^{n+1}}{(n+1)!}(1+o(1))\log\dfrac{\|x(\alpha)\|_v^d}{\|Q_{j_1(v,\alpha)}(x(\alpha))\|_v}+H_V(N)\log\|F(x(\alpha))\|_v\\
-NH_V(N)\log\|x(\alpha)\|_v
+\log\prod_{j=1}^{H_V(N)}\|L_j(\alpha)\|_v.
\end{align*}
This is equivalent to
\begin{align}\label{e:310}
\dfrac{\triangle N^{n+1}}{(n+1)!}(1+o(1))\log \dfrac{\|x(\alpha)\|_v^d}{\|Q_{j_1(v,\alpha)}(x(\alpha))\|_v}\leq \log\prod_{j=1}^{H_V(N)}\dfrac{\|F(x(\alpha))\|_v\|L_j(\alpha)\|_v}{\|L_j(F(x(\alpha)))\|_v}\notag\\
-H_V(N)\log\|F(x(\alpha))\|_v
+NH_V(N)\log\|x(\alpha)\|_v-\log\prod_{j=1}^{H_V(N)}\|L_j(\alpha)\|_v.
\end{align}
 
 \begin{lemma}\label{l:32} We have, for all $v\in M_k$,
 
 i) $h(L_j(\alpha))=o(h(F(x(\alpha)))), j=1,\ldots, H_V(N).$
 
 ii) $-\log\prod\limits_{j=1}^{H_V(N)}\|L_j(\alpha)\|_v\leq
2NH_V(N)C\left(\sum\limits_{j=1}^qh(Q_j(\alpha))+\sum\limits_{i=1}^r h(P_i)+\log C\right).$
 \end{lemma}
\begin{proof}
From \eqref{e:41}, we have $h(\psi_j(\alpha))\leq i_jh(Q_{j_1(v,\alpha)}(\alpha))$ with $i_j\leq N/d.$
 Combining with \eqref{e:301},\eqref{e:302}, we have
 \begin{align}\label{e:303}
 h(L_j(\alpha)), h(w_{jr}(\alpha))\leq 2C\left(\dfrac{N}{d}h(Q_{j_1(v,\alpha)}(\alpha))+\sum_{i=1}^rh(P_i)+\log (C)\right).
 \end{align}
 Applying Lemma \ref{l:29}, we have
 \begin{align}\label{e:304}
 h(F(x(\alpha)))=Nh(x(\alpha))+O(1).
 \end{align}
 Combining \eqref{e:303}, \eqref{e:304} with the fact that $h(Q_j(\alpha))=o(h(x(\alpha)))$, we have the desired result in i).
 Part ii) follows from the following inequality
 \begin{align*}
 \log \|L_j(\alpha)\|_v&\geq \log\|w_{jr}(\alpha)\|_v\geq -h(w_{jr}(\alpha))\\
 &\geq -2C\left(\dfrac{N}{d}h(Q_{j_1(v,\alpha)}(\alpha))+\sum_{i=1}^rh(P_i)+\log (C)\right),
 \end{align*}
 where the last inequality follows from \eqref{e:303}.

\end{proof}

We continue to prove Theorem \ref{t:31}.

From \eqref{e:310} and applying Lemma \ref{l:32} ii, we have
\begin{align}\label{e:3003}
\dfrac{\triangle N^{n+1}}{(n+1)!}(1+o(1))\log \dfrac{\|x(\alpha)\|_v^d}{\|Q_{j_1(v,\alpha)}(x(\alpha))\|_v}\leq \log\prod_{j=1}^{H_V(N)}\dfrac{\|F(x(\alpha))\|_v\|L_j(\alpha)\|_v}{\|L_j(F(x(\alpha)))\|_v}\notag\\
-H_V(N)\log\|F(x(\alpha))\|_v
+NH_V(N)\log\|x(\alpha)\|_v\notag\\
+2NH_V(N)C\left(\sum\limits_{j=1}^qh(Q_j(\alpha))+\sum\limits_{i=1}^r h(P_i)+\log C\right).
\end{align}
Notice that the collection of all possible linear forms $L_j$, $1\leq j\leq H_V(N)$ when $ v$ runs over $ S$ and $ \alpha$ runs over $ A$ is a finite set and denote it by $\{L_1,\ldots, L_u\}.$ 

Combining \eqref{e:37}, \eqref{e:3003}  taking sum over $v\in S$, we have
 \begin{align}\label{e:316}
\dfrac{\triangle N^{n+1}}{(n+1)!m}(1+o(1))\sum_{v\in S}\log\prod_{j=1}^q\dfrac{\|x(\alpha)\|_v^d}{\|Q_j(x(\alpha))\|_v}\notag\\
\leq
\sum_{v\in S} \left(\max_K\log\prod_{j\in K}\dfrac{\|F(x(\alpha))\|_v\|L_j(\alpha)\|_v}{\|L_j(F(x(\alpha)))\|_v}-H_V(N)\log\|F(x(\alpha))\|_v\right)\notag\\
+NH_V(N)\sum_{v\in S}\log\|x(\alpha)\|_v+2NH_V(N)|S|C\left(\sum_jh(Q_j(\alpha))+\sum_{i=1}^r h(P_i)+\log C\right)\notag\\
+ \dfrac{1}{m}\sum_{v\in S}\log\tilde{h}_v(\alpha),
\end{align}
where the maximum is taken over all subsets $K$ of $\{1,\ldots, u\}$ such that $L_j, j\in K$ are linearly independent over $\mathcal{R}_{A,\{Q_j\}_{j=1}^q}$. Since the left-hand side of above inequality is independent of the choice of components of $x(\alpha)$, we can choose the components such that $\|x(\alpha)\|_v=1$ for all $v\not\in S$. So, we have
$$\sum_{v\in S}\log\|x(\alpha)\|_v=h(x(\alpha)), \sum_{v\in S}\log\|F(x(\alpha))\|_v\geq h(F(x(\alpha))).$$
Combining with \eqref{e:316}, we have
 \begin{align}\label{e:319}
\dfrac{\triangle N^{n+1}}{(n+1)!m}(1+o(1))\sum_{v\in S}\log\prod_{j=1}^q\dfrac{\|x(\alpha)\|_v^dv}{\|Q_j(x(\alpha))\|}\notag\\
\leq
\left(\sum_{v\in S}\max_K\log\prod_{j\in K}\dfrac{\|F(x(\alpha))\|_v\|L_j(\alpha)\|_v}{\|L_j(F(x(\alpha)))\|_v}-H_V(N)h(F(x(\alpha)))\right)+NH_V(N)h(x(\alpha))\notag\\
+2NH_V(N)|S|C\left(\sum_{j=1}^qh(Q_j(\alpha))+\sum_{i=1}^r h(P_i)+\log C\right)+\dfrac{1}{m}\sum_{v\in S}\log\tilde{h}_v(\alpha),
\end{align}
where the maximum is taken over all subsets $K$ of $\{1,\ldots, u\}$ such that $L_j, j\in K$ are linearly independent over $\mathcal{R}_{A,\{Q_j\}_{j=1}^q}$.

Now, we check the conditions $(1)-(2)$ of  Schmidt's subspace theorem for moving hyperplanes $L_j(\alpha), 1\leq j\leq H_V(N)$ and a collection of points $F(x(\alpha)): A\longrightarrow \mathbb{P}^{H_V(N)-1}(k)$.

 $\bullet$ Condition (1).  Since $A$ is coherent with respect to $Q_j, 1\leq j\leq q$ and the coefficients of $L_j, 1\leq j\leq u,$ are in $\mathcal{R}_{A,\{Q_j\}_{j=1}^q}$, the set  $A$ is also coherent with respect to $\{L_j\}_{j=1}^{u}$. Moreover, $\mathcal{R}_{A,\{Q_j\}_{j=1}^q}\supset\mathcal{R}_{A,\{L_j\}_{j=1}^{u}}.$ Since $x$ is algebraically non-degenerate over $\mathcal{R}_{A,\{Q_j\}_{j=1}^q}$, there is no homogeneous polynomial $Q\in \mathcal{R}_{A,\{Q_j\}_{j=1}^q}[X_0,\ldots,X_M]$ $\backslash I_{A, V,\{Q_j\}_{j=1}^q}$ such that $Q(x_o(\alpha),\ldots,x_M(\alpha))=0,$ for all $\alpha\in A$ outside a finite subset of $A$. Hence, the restrictions of coordinates  $y_1,\ldots,y_{H_V(N)}$ of the map $y:\Lambda\longrightarrow\mathbb{P}^{H_V(N)-1}(k), \alpha\longmapsto F(x(\alpha))=(y_1(\alpha),\ldots,y_{H_V(N)}(\alpha))$ to $A$ are linearly independent over $\mathcal{R}_{A,\{Q_j\}_{j=1}^q}\supset\mathcal{R}_{A,\{L_j\}_{j=1}^u}.$


$\bullet$ Condition (2). See lemma \ref{l:32} (i).

Now, we can apply theorem A for moving hyperplanes $L_j(\alpha), 1\leq j\leq H_V(N)$ and a collection of points $F(x(\alpha)): A\longrightarrow \mathbb{P}^{H_V(N)-1}(k)$. Then, there exists an infinite subset $B\subset A$ such that for all $\alpha\in B$,
\begin{align}\label{e:318}\sum_{v\in S}\max_K\log\prod_{j\in K}\dfrac{\|F(x(\alpha))\|_v\|L_j(\alpha)\|_v}{\|L_j(F(x(\alpha)))\|_v}\leq (H_V(N)+\epsilon) h(F(x(\alpha))),
\end{align}
where the maximum is taken over all subsets $K$ of $\{1,\ldots, u\}$ such that $L_j(\alpha), j\in K$ are linearly independent over $k$ for all $\alpha\in A.$

Notice that if $L_j, j\in K$ are linearly independent over $\mathcal{R}_{A,\{Q_j\}_{j=1}^q}$ then $L_j(\alpha), j\in K$ are linearly independent over $k$ for all but finitely $\alpha\in A$. Thus, combining \eqref{e:318} and \eqref{e:319}, we have
 \begin{align*}
\dfrac{\triangle N^{n+1}}{(n+1)!m}(1+o(1))\sum_{v\in S}\log\prod_{j=1}^q\dfrac{\|x(\alpha)\|_v^d}{\|Q_j(x(\alpha))\|_v}\leq
N\epsilon h(x(\alpha))+NH_V(N)h(x(\alpha))\notag\\
+2H_V(N)N|S|C\left(\sum_{j=1}^qh(Q_j(\alpha))+\sum_{i=1}^r h(P_i)+\log C\right)+\dfrac{1}{m}\sum_{v\in S}\log\tilde{h}_v(\alpha).
\end{align*}
Hence,
\begin{align}\label{e:320}
\sum_{v\in S}\log\prod_{j=1}^q\dfrac{\|x(\alpha)\|_v^d}{\|Q_j(x(\alpha))\|_v}\leq m(n+1)(1+o(1))h(x(\alpha))\notag\\
+2m(n+1)|S|C(1+o(1))
\left(\sum_{j=1}^qh(Q_j(\alpha))+\sum_{i=1}^r h(P_i)+\log C\right)\notag\\
+\dfrac{(n+1)!}{\triangle N^{n+1}}(1+o(1))\sum_{v\in S}\log\tilde{h}_v(\alpha).
\end{align}
Notice that $\sum_{v\in S}\log\|Q_j(\alpha)\|_v\leq h(Q_j(\alpha))$ (one coefficient of $Q_j$ equals to 1). Hence,
\begin{align}\label{e:321}
\sum_{v\in S}\log\prod_{j=1}^q\dfrac{\|x(\alpha)\|_v^d\|Q_j(\alpha)\|_v}{\|Q_j(x(\alpha))\|_v}
\leq \sum_{v\in S}\log\prod_{j=1}^q\dfrac{\|x(\alpha)\|_v^d}{\|Q_j(x(\alpha))\|_v}+ q\sum_{j=1}^qh(Q_j(\alpha)).
\end{align}
From \eqref{e:320} and \eqref{e:321}, we have
\begin{align}
\sum_{v\in S}\log\prod_{j=1}^q\dfrac{\|x(\alpha)\|_v^d\|Q_j(\alpha)\|_v}{\|Q_j(x(\alpha))\|_v} \leq m(n+1)(1+o(1))h(x(\alpha))\notag\\
+2m(n+1)|S|qC(1+o(1))
\left(\sum_{j=1}^qh(Q_j(\alpha))+\sum_{i=1}^r h(P_i)+\log C\right)\notag\\
+\dfrac{(n+1)!}{\triangle N^{n+1}}(1+o(1))\sum_{v\in S}\log\tilde{h}_v(\alpha)
\end{align}
Since $h(Q_j(\alpha))=o(h(x(\alpha))), \log\tilde{h}_v(\alpha)=o(h(x(\alpha)))$ and by Northcott's theorem, we have
\begin{align*}2m(n+1)|S|Cq(1+o(1))
\left(\sum_{j=1}^qh(Q_j(\alpha))+\sum_{i=1}^r h(P_i)+\log C\right)\notag\\
+\dfrac{(n+1)!}{\triangle N^{n+1}}(1+o(1))\sum_{v\in S}\log\tilde{h}_v(\alpha)
<\dfrac{\epsilon}{4}h(x(\alpha)),
\end{align*}
for all but finitely $\alpha\in B.$ Then for all but finitely $\alpha\in B$, we have
$$\sum_{v\in S}\sum_{j=1}^q\lambda_{Q_j(\alpha),v}(x(\alpha))
\leq d(m(n+1)+\epsilon)h(x(\alpha)).
$$

 \end{proof}

Now, we show how to deduce the main theorem from theorem \ref{t:31}.
\vskip0.2cm
\noindent
\textit{Proof of the Main Theorem.}

 We assume that
$$Q_j=\sum_{I\in\mathcal{T}_{d_j}}a_{j,I}x^I, a_{j,I}: \Lambda\rightarrow k, 1\leq j\leq q.$$  
Let $A\subset\Lambda$ be an infinite index subset that is  coherent with respect to $\{Q_j\}_{j=1}^q$. Since $x$ is algebraically non-degenerate over $V$ and $\mathcal{R}_{\{Q_j\}_{j=1}^q}$ then we have
$$A_j:=\{\alpha\in A|Q_j(x_0(\alpha),\ldots, x_M(\alpha))\not=0\}, 1\leq j \leq q$$
are infinite subsets of $A$.  Therefore, by passing to an infinite subset of $A$, we can assume that 
$$Q_j(x_0(\alpha),\ldots, x_M(\alpha))\not=0, \forall\alpha\in A, j=1,\ldots, q.$$

By coherence, for each $j\in\{1,\ldots,q\}$, there exists $I_j\in\mathcal{T}_{d_j}$ such that $a_{j,I_j}\not=0$ for all but finitely many $\alpha\in A.$ Set 
$$\tilde{Q}_j=\dfrac{Q_j}{a_{j,I_j}}=\sum_{I\in \mathcal{T}_{d_j}}\tilde{a}_{j,I}x^I, 1\leq j\leq q.$$
  
Let $d=lcm \{d_j\}_{j=1}^q.$ 
We have
$$\tilde{Q}_j^{d/d_j}=(\sum_{I\in\mathcal {T}_{d_j}}\tilde{a}_{j,I}x^I)^{d/d_j}=\sum_{\tilde{I}\in\mathcal {T}_d}g_{j,\tilde{I}}x^{\tilde{I}}.$$
By coherence of $A$ with respect to $\{Q_j\}_{j=1}^q$, A is also coherent with respect to $\{\tilde{Q}_j^{d/d_j}\}_{j=1}^q$. Then, for each $j\in\{1,\ldots,q\}$, there exists $\tilde{I}_j\in\mathcal{T}_d$ such that $g_{j,\tilde{I}_j}\not=0$ for all but finitely many $\alpha\in A.$  Consider  the set of polynomials $$\{\tilde{Q}^{d/d_j}_j/g_{j,\tilde{I}_j}\}_{j=1}^q.$$

  For all infinite subsets $B\subset A,$ we have
  $$\mathcal{R}_{B,\{Q_j\}_{j=1}^q}\supset\mathcal{R}_{B,\{\tilde{Q}_j\}_{j=1}^q}\supset \mathcal{R}_{B,\{\tilde{Q}^{d/d_j}_j\}_{j=1}^q}\supset\mathcal{R}_{B, \{\tilde{Q}^{d/d_j}_j/g_{j,\tilde{I}_j}\}_{j=1}^q}.$$
  Together with the fact that $x$ is algebraically non-degenerate over $V$ and $\mathcal{R}_{\{Q_j\}_{j=1}^q},$ we have  $x$ is algebraically non-degenerate over $V$ and $\mathcal{R}_{\{\tilde{Q}^{d/d_j}_j/g_{j,\tilde{I}_j}\}_{j=1}^q}.$ 
   Moreover, for each $1\leq j\leq q$, $\{\tilde{Q}^{d/d_j}_j/g_{j,\tilde{I}_j}\}_{j=1}^q$ has at least one coefficient equal to 1. 

   Therefore, we can apply theorem \ref{t:31} for $\{\tilde{Q}^{d/d_j}_j/g_{j,\tilde{I}_j}\}_{j=1}^q$ and collection of points $x: A\longrightarrow V(k)$. Then, we know that there exists an infinite index subset $B\subset A$ such that for all $\alpha\in B,$
  
  $$\sum_{v\in S}\sum_{j=1}^q\dfrac{1}{d}\log\dfrac{\|x(\alpha)\|_v^d\|\tilde{Q}^{d/d_j}_j(\alpha)\|_v}{\|\tilde{Q}^{d/d_j}_j(x(\alpha))\|_v}\leq (m(n+1)+\epsilon/3)h(x(\alpha)).$$
  Since for each $1\leq j\leq q,$ $\tilde{Q}_j $ has at least one coefficient equal to 1, we have
  $$h(\tilde{a}_{j,I}(\alpha))\leq h(\tilde{Q}_j(x(\alpha)))=o(h(x(\alpha))), I\in\mathcal{T}_{d_j}, \tilde{a}_{j,I}(\alpha)\not=0.$$
  Therefore, for all $ v\in M_k, $ we have
  $$\log\|\tilde{a}_{j,I}(\alpha)\|_v\geq - h(\tilde{a}_{j,I}(\alpha))\geq -o(h(x(\alpha))), I\in\mathcal{T}_{d_j}, \tilde{a}_{j,I}(\alpha)\not=0.$$
  Hence, for all $v\in M_k,$ we have
   $$\log\|\tilde{Q}^{d/d_j}_j(\alpha)\|_v\geq \dfrac{d}{d_j}\log\|\tilde{a}_{j,\hat{I}_j}(\alpha)\|_v\geq -o(h(x(\alpha))),$$ where $\hat{I}_j:=\max\{I| \tilde{a}_{j,I}(\alpha)\not=0\}$ (in the lexicographic order).
   Therefore, outside a finite subset of $B$, we have
   $$\sum_{v\in S}\sum_{j=1}^q\dfrac{1}{d}\log\dfrac{\|x(\alpha)\|_v^d}{\|\tilde{Q}^{d/d_j}_j(x(\alpha))\|_v}\leq (m(n+1)+2\epsilon/3)h(x(\alpha)).$$
  Since for each  $ 1\leq j\leq q, \tilde{Q}_j$ has at least one coefficient equal to 1, we have
  $$\sum_{v\in S} \log\|\tilde{Q}_j(\alpha)\|_v\leq h(\tilde{Q}_j(\alpha))=o(h(x(\alpha))), 1\leq j\leq q.$$
    Therefore, outside a finite subset of $B,$ we have
    $$\sum_{v\in S}\sum_{j=1}^q\dfrac{1}{d_j}\log\dfrac{\|x(\alpha)\|_v^{d_j}\|\tilde{Q}_j(\alpha)\|_v}{\|\tilde{Q}_j(x(\alpha))\|_v}\leq (m(n+1)+\epsilon)h(x(\alpha)),$$
   which implies the desired result.


\end{document}